\documentclass[11pt]{amsart}

\usepackage[a4paper,margin=30mm]{geometry}

\usepackage[foot]{amsaddr}
\usepackage{graphicx}
\usepackage{amsmath,amsthm,amssymb,amsfonts}
\usepackage{dsfont} 
\usepackage{bm}
\usepackage[square,numbers]{natbib}

\usepackage{mleftright}\mleftright 

\numberwithin{equation}{section}
\numberwithin{figure}{section}

\newcommand{\1}{\mathds{1}}
\newcommand\dd{\mathrm{d}}

\newcommand{\C}{\mathbb{C}}
\newcommand{\E}{\mathbb{E}}
\renewcommand{\Pr}{\mathbb{P}}
\newcommand{\R}{\mathbb{R}}
\newcommand{\Cov}{\mathrm{Cov}}
\newcommand{\diag}{\mathrm{diag}}
\newcommand{\erf}{\mathtt{erf}}
\newcommand{\erfc}{\mathtt{erfc}}
\newcommand{\ind}{{\mathrm{ind}}}
\newcommand{\pf}{\mathrm{pf}}
\newcommand{\sgn}{\mathrm{sgn}}
\newcommand{\tr}{\mathrm{tr}}
\newcommand{\Var}{\mathrm{Var}}

\newtheorem{theorem}{Theorem}[section]

\newtheorem{lemma}[theorem]{Lemma}
\newtheorem{proposition}[theorem]{Proposition}
\newtheorem{remark}[theorem]{Remark}

\begin{document}

\title[]{
Expected number density of critical points of smooth Gaussian random fields in arbitrary dimensions}

\author{Satoshi Kuriki\textsuperscript{1,2}}
\address{\textsuperscript{1}The Institute of Statistical Mathematics}
\address{\textsuperscript{2}The Graduate University for Advanced Studies (SOKENDAI)}
\email{kuriki@ism.ac.jp}

\author{Takahiko Matsubara\textsuperscript{3,2}}
\address{\textsuperscript{3}Institute of Particle and Nuclear Studies, High Energy Accelerator Research Organization (KEK)}
\email{tmats@post.kek.jp}

\author{Satoshi Iso\textsuperscript{4,3,2}}
\address{\textsuperscript{4}RIKEN Center for Interdisciplinary Theoretical and Mathematical Sciences (iTHems)}
\email{iso@post.kek.jp}

\keywords{de Bruijn's theorem, GOE matrix, Kac-Rice formula, Weierstrass transform}

\maketitle

\begin{abstract}
We obtain explicit formulas for the expected number and height distribution of critical points of smooth isotropic Gaussian random fields on $\R^d$.
The expected number density formula is expressed in terms of at most one-dimensional integrals, regardless of the dimension $d$.
To obtain the formulas, we provide a variant of de Bruijn's theorem,
as well as Weierstrass' convolution formula with a Gaussian random variable and its inversion.
\end{abstract}

\section{Introduction}

Let $X(x)$, $x=(x^i)_{1\le i\le d}\in\R^d$ be an isotropic smooth Gaussian random field on the $d$-dimensional Euclidean space.
We assume that the map $x\mapsto X(x)$ is of class $C^2$.
The Gaussian random field is characterized by its mean and covariance.
Because of the isotropic assumption, we assume without loss of generality that the mean function is constant and the covariance between two points $x$ and $y$ is a function of $\Vert x-y\Vert$:
\begin{equation}
\label{e-cov}
 \E[X(x)]=0, \quad \E[X(x) X(y)]=C\bigl(\tfrac{1}{2}\Vert x-y\Vert^2\bigr), \quad C(0)=1.
\end{equation}
We assume regularity conditions on the smoothness of $C(\cdot)$ so that the random field $x\mapsto X(x)$ is smooth enough and has discrete critical points almost surely.

The set of critical points of index $m$ is
\[
 CP(m) = \bigl\{ x \in \R^d \mid \nabla X(x)=0,\,\ind\bigl(\nabla^2 X(x)\bigr)=m \bigr\},
\] 
where $\nabla X=(X_i)_{1\le i\le d}$, $X_i=\partial X/\partial x^i$,
$\nabla^2 X(x)=(X_{ij}(x))_{1\le i,j\le d}$, $X_{ij}=\partial^2 X/\partial x^i\partial x^j$,
and $\ind(\cdot)$ is the number of negative eigenvalues.
Let $V$ be an arbitrary domain in $\R^d$ having the volume $|V|=\int_V\dd x$.
We aim to compute the \textit{expected number density} $f_{m,d}(\nu)$ of the critical points of index $m$ defined by
\begin{equation}
\label{fm}
 \int_\nu^\infty f_{m,d}(v)\,\dd v = \frac{1}{|V|}\,
 \E \Bigl[ \#\bigl\{x\in CP(m)\cap V, \, X(x)>\nu \bigr\} \Bigr],
\end{equation}
where
$\#\{\cdot\}$ is the cardinality of a finite set.
Because of the isotropic property, the right-hand side is proportional to the volume of $V$.
The integrand of the left-hand side $f_{m,d}(v)\dd v$ is interpreted as the expected number density of critical points of index $m$ in the level set
\begin{equation}
\label{levelset}
 \bigl\{ x\in \R^d \,|\, X(x)\in (\nu,\nu+\dd\nu) \bigr\}.
\end{equation}

The normalized $f_{m,d}(\nu)$ is interpreted as the height density function of $X(x^*)$ at the critical points $x^*\in CP(m)$:
\begin{equation*}
 \frac{\int_{\nu}^\infty f_{m,d}(v)\,\dd v}{\int_{-\infty}^\infty f_{m,d}(v)\,\dd v}
 = \Pr \bigl(X(x^*)>\nu \,|\, x^*\in CP(m)\bigr).
\end{equation*}

\citet{Cheng-Schwartzman:2018} obtained an expression for $f_{m,d}(\nu)$ as an expectation with respect to the probability measure of the eigenvalues of $d\times d$ Gaussian orthogonally invariant (GOI) random matrices.
The distribution of the GOI matrix depends on a parameter $\sigma$, whose feasible parameter space is (\ref{sigma}).
The Gaussian orthogonal ensemble (GOE) corresponds to the special case $\sigma=0$.
Although the case where $\sigma\ge 0$ looks easier to handle than the case where $\sigma<0$, \cite{Cheng-Schwartzman:2018} showed that the function $f_{m,d}(\nu)$ has no singularities at $\sigma=0$ by expressing $f_{m,d}(\nu)$ with the probability measure of the GOI.

However, their resulting formula needs $(d+2)$-fold integration.
They demonstrate that $f_{m,d}(\nu)$ for $d=2$ can be expressed by interms of special functions (complementary error function $\erfc$) without integrals.
However, when $d\ge 3$, it is not obvious how to simplify the multifold integration.

In this paper, we provide a formula for $f_{m,d}(\nu)$ that requires at most one-dimensional integral.
The integrand of the one-dimensional integral is explicitly expressed in terms of elementary functions and the error functions (when the dimension $d$ is arbitrary).
For this purpose, we evaluate the generating function $F_d(\nu;z)$ for $f_{m,d}(\nu)$ by
\[
 F_d(\nu;z) = \sum_{m=0}^d f_{m,d}(\nu)z^m.
\]
We will also see that $F_d(\nu;z)$ has simple forms when $z=\pm 1$.
$F_d(\nu;-1)\dd\nu$ is the alternating sum of the expected number densities of critical points in the set (\ref{levelset}).
This is referred to as the expected Euler number density and is explicitly known as
\[
 F_d(\nu;-1) = (-1)^d\Bigl(\frac{\gamma}{2\pi}\Bigr)^{d/2} H_d(\nu) \phi(\nu),
\]
where $\gamma=\Var(\partial X(x)/\partial x^1)$, and
\begin{equation}
\label{phi-H}
 \phi(\nu) = \frac{1}{\sqrt{2\pi}} e^{-\nu^2/2}
 \quad\mbox{and}\quad
 H_d(\nu) = \phi(\nu)^{-1}\left(-\frac{\dd}{\dd\nu}\right)^d \phi(\nu)
\end{equation}
are the probability density function of the standard Gaussian distribution $\mathcal{N}(0,1)$ and the Hermite polynomial of degree $d$, respectively
 \citep{Adler:1981,Tomita:1986}.

The study of critical points of random fields has been motivated by research in cosmology \cite{Adler:1981}.
In the standard picture of structure formation, high peaks of the primordial cosmological density field indicate the formation sites of cosmological structures,  such as galaxies, galaxy clusters, and other large-scale structures that subsequently emerge.
Therefore, the abundance and spatial distribution of local maxima of random fields have been used to analyze the statistical properties of cosmological structures \cite{BBKS:1986}.

In addition, peak and critical point statistics of random fields have various applications in cosmology.
They provide powerful tools for testing Gaussianity \cite{Gay-etal:2012,Matsubara:2020},
characterizing hot and cold spots in cosmic microwave background (CMB) observations \cite{Larson-Wandelt:2004},
and extracting cosmological information from weak-lensing maps \cite{Maturi-etal:2011}.
The combination of critical point statistics and Betti numbers has been proposed to reveal non-Gaussianity and regional topological differences \cite{Henderson-etal:2020}.

A successful application outside cosmology is the study of phase transitions.
The quantity $F_d(\nu;1)\dd\nu$ is the expected number of all critical points in the set (\ref{levelset}), and \citep{Fyodorov:2004} pointed out that the behavior of the number of critical points as the dimension $d$ tends to infinity reflects the spin-glass phase transition.
For subsequent studies of critical points and spin-glass phase transitions, see, e.g., \cite{Auffinger-etal:2013,Yamada-Vilenkin:2018}.
For other applications of critical points in various areas, such as oceanography and neuroimaging, see the comprehensive reference in \cite{Cheng-Schwartzman:2018}.

In this paper, we derive a formula for $F_{d}(\nu;z)$ that involves at most a one-dimensional integral.
Our strategy consists of two steps.
First, $F_{d}(\nu;z)$ is obtained in the cases where the distribution of the second derivative random field (i.e., Hessian process) coincides with that of a GOE matrix.
Second, we show that any $F_{d}(\nu;z)$ can be obtained as a Weierstrass transform or its inverse transform of the $F_{d}(\nu;z)$ for the GOE case.
The Weierstrass transform and its inverse are given by one-dimensional integrals.

The outline of this paper is as follows.
In Section \ref{sec:preliminary}, we summarize preliminary facts and present an expression for $F_{d}(\nu;z)$, which will be simplified in the next section.
In Section \ref{sec:main}, a simplified formula for $F_{d}(\nu;z)$ is obtained according to the two steps mentioned above.
The convolution formula with a Gaussian random variable referred to as the Weierstrass transform, and its inversion formula are provided in a form suitable for our application. 
In the Appendix, we prepare a variant of de Bruijn's theorem \cite{deBruijn:1955}.
A simple proof of \citet{Fyodorov:2004}'s formula is given by establishing identity (\ref{LdHKd}).
It is also shown that the $F_{d}(\nu;z)$ for $d=1,2$ can be expressed without using the integrals.

\section{Preliminary}
\label{sec:preliminary}

\subsection{The Kac-Rice formula with regularity conditions}
\label{subsec:kac-rice}

The standard technique for handling critical points is the Kac-Rice formula, or equivalently, the coarea formula.
We first explain it in an intuitive way, and then restate it as a mathematical proposition.

Let $g:\R^d\to\R^d$ be a smooth function whose zeros are isolated,
and let $h:\R^d\to\R$ be a continuous function.
Let $\delta^d(\cdot)$ be the delta function in $\R^d$.
The coarea formula states
\[
 \sum_{x\in V: g(x)=0} h(x) = \int_V h(x) \delta^d(g(x)) \left|\frac{\partial g(x)}{\partial x}\right| \,\dd x,
\]
where $\dd x$ is the Lebesgue measure on $\R^d$.
By letting $g(x)=\nabla X(x)$, the number of critical points in (\ref{fm}) is expressed as
\begin{align*}
\# \bigl\{ & x\in CP(m)\cap V,\, X(x)>\nu \bigr\} \\
&= \sum_{x\in V: \nabla X(x)=0} \1\{\ind(\nabla^2 X(x))=m,\ X(x)>\nu\} \\
&= \int_{V} \1\{\ind(\nabla^2 X(x))=m\} \1\{X(x)>\nu\} \delta^d(\nabla X(x)) |\det(\nabla^2 X(x))| \,\dd x \\
&= (-1)^m \int_{V} \1\{\ind(\nabla^2 X(x))=m\} \1\{X(x)>\nu\}
 \delta^d(\nabla X(x)) \det(\nabla^2 X(x)) \,\dd x.
\end{align*}
Noting that $\int_V \dd x=|V|$, and as will be seen below, that $(X(x),\nabla^2 X(x))$ and $\nabla X(x)$ are independent for each fixed $x$,
if the expectation $\E[\ ]$ and the integral $\int_V\dd x$ can be interchanged,
we have
\begin{equation}
\label{fmd} 
\begin{aligned}
 f_{m,d}(\nu)
&= -(-1)^m \frac{\dd}{\dd\nu}\E[ \det(\nabla^2 X) \1\{\ind(\nabla^2 X)=m\} \1\{X>\nu\} \,|\, \nabla X=0] p_{\nabla X}(0) \\
&= \frac{(-1)^m}{(2\pi\gamma)^{d/2}} \E[\det(\nabla^2 X) \1\{\ind(\nabla^2 X)=m\} \,|\, X=\nu] \phi(\nu),
\end{aligned}
\end{equation}
where
\[
 p_{\nabla X}(0) = \frac{1}{(2\pi\gamma)^{d/2}}
\]
is the probability density function of $\nabla X(x)$ evaluated at $\nabla X(x)=0$.
In (\ref{fmd}), the argument $x$ of $X$, $\nabla X$ and $\nabla^2 X$ is omitted since their distributions are independent of the point $x$.

The informal derivation above is justified under the regularity conditions,
which is fully stated in \citet{Adler-Taylor:2007}.
Throughout the paper, we impose the assumptions of Proposition \ref{prop:kac-rice} below.

\begin{proposition}
\label{prop:kac-rice}

Under the regularity conditions in Theorem 11.2.1 of \cite{Adler-Taylor:2007} with $f(\cdot)$ and $g(\cdot)$ replaced by $\nabla X(\cdot)$ and $(X(\cdot),\nabla^2 X(\cdot))$, respectively, formula (\ref{fmd}) holds.
\end{proposition}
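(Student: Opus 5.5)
We will obtain (\ref{fmd}) from Theorem 11.2.1 of \cite{Adler-Taylor:2007}, the Kac--Rice meta-theorem. Take $f=\nabla X:\R^d\to\R^d$, $g=(X,\nabla^2 X)$, the target level $u=0$, and the open set $B=\{(t,A): t>\nu,\ \ind(A)=m\}\subset\R\times\mathrm{Sym}(d)$. With these choices the theorem gives
\[
 \E\bigl[\#\{x\in V:\nabla X(x)=0,\ (X(x),\nabla^2X(x))\in B\}\bigr]
 = \int_V \E\bigl[|\det\nabla^2 X(x)|\,\1_B(X(x),\nabla^2X(x)) \,\big|\, \nabla X(x)=0\bigr]\, p_{\nabla X(x)}(0)\,\dd x .
\]
The regularity conditions of that theorem are assumed in Proposition \ref{prop:kac-rice}, so nothing needs to be checked there. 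One technical point concerns the boundary of $B$, which is the set $\{t=\nu\}\cup\{\det A=0\}$. The theorem requires this boundary to be hit with probability zero. This holds because $X(x)$ has a continuous distribution and $\det\nabla^2X(x)$ is a nonconstant polynomial in a nondegenerate Gaussian vector.

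The remaining work is a Gaussian computation at a single point $x$. Write $C'=C'(0)$ and $C''=C''(0)$. From the covariance $C(\frac12\|x-y\|^2)$ we get $\E[X_iX_j]=-C'\delta_{ij}$, so $\gamma=-C'$. The odd-order cross moments vanish: $\E[X X_i]=0$ and $\E[X_iX_{jk}]=0$, since each is an odd derivative of $C(\frac12\|x-y\|^2)$ evaluated at $x=y$. As $(X,\nabla X,\nabla^2X)$ is jointly Gaussian, $\nabla X$ is therefore independent of $(X,\nabla^2X)$. Two consequences follow. First, the conditioning on $\nabla X=0$ can be dropped. Second, $p_{\nabla X}(0)=(2\pi\gamma)^{-d/2}$. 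By stationarity the integrand does not depend on $x$, so the integral over $V$ becomes a factor $|V|$, and we have
\[
 \int_\nu^\infty f_{m,d}(v)\,\dd v=(2\pi\gamma)^{-d/2}\E\bigl[|\det\nabla^2X|\1\{\ind(\nabla^2X)=m\}\1\{X>\nu\}\bigr].
\]
On the event $\{\ind=m\}$ we have $|\det\nabla^2X|=(-1)^m\det\nabla^2X$, which gives the first line of (\ref{fmd}).

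For the second line, condition on $X$. Because $X\sim\mathcal N(0,1)$, the expectation equals $\int_\nu^\infty \E[(-1)^m\det(\nabla^2X)\1\{\ind=m\}\,|\,X=v]\,\phi(v)\,\dd v$. Differentiating in $\nu$ then yields the second line. To justify the differentiation we need the map $v\mapsto \E[\det(\nabla^2X)\1\{\ind(\nabla^2X)=m\}\,|\,X=v]$ to be continuous. This follows from the Gaussian regression representation of the conditional law: $\nabla^2X\,|\,X=v$ is Gaussian with mean $v\,C'I$ (since $\E[X X_{ij}]=C'\delta_{ij}$) and fixed covariance. Hence the conditional expectation is an integral of a polynomially bounded function against a Gaussian density that depends smoothly on $v$, and dominated convergence applies. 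Continuity also holds when the conditional covariance is degenerate, provided it is still a GOI-type law. I expect the main obstacle to be this degenerate case, together with confirming that the null-boundary hypothesis on $B$ holds under the paper's feasible range of $\sigma$.
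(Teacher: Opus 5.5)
Your proposal is correct and takes the same approach as the paper: you make rigorous, via Theorem 11.2.1 of Adler--Taylor with $f=\nabla X$ and $g=(X,\nabla^2X)$, the paper's informal coarea derivation (independence of $\nabla X$ from $(X,\nabla^2X)$, $|\det\nabla^2X|=(-1)^m\det\nabla^2X$ on $\{\ind=m\}$, stationarity, then differentiation in $\nu$). The remaining points are minor and your anticipated obstacles are not real: $\{\ind(A)=m\}$ is not open, so add $\det A\neq 0$ to $B$ (harmless, since $\det$ vanishes on the difference); the theorem's condition on $B$ is only that $\partial B$ have dimension $K-1$, which is immediate here; and continuity in $v$ needs no density, because $\nabla^2X\,|\,X=v\stackrel{d}{=}M+vC'(0)I_d$ with $M$ of fixed law, and $A\mapsto\det(A)\1\{\ind(A)=m\}$ is continuous (it vanishes wherever the index can jump) and polynomially bounded, so dominated convergence gives continuity even in the degenerate case.
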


\subsection{Hessian processes as random matrices}
\label{subsec:hessian}

Under the regularity conditions assumed in Section \ref{subsec:kac-rice},
by taking the derivatives of (\ref{e-cov}) with respect to $\partial/\partial x^i$,  $\partial/\partial y^j$, etc., and evaluating them at $x=y$, we obtain a series of identities.
For example, 
\[
 \E[X_i(x)X_j(x)] = \frac{\partial^2 \E[X(x)X(y)]}{\partial x^i\partial y^j}\bigg|_{x=y} = \frac{\partial^2 C\bigl(\frac{1}{2}\Vert x-y\Vert^2\bigr)}{\partial x^i\partial y^j} \bigg|_{x=y} = -\delta_{ij} C'(0).
\]
(See also the discussion in \citep[Section 2.2]{Kuriki-Matsubara:2023}.)
In summary, for each $x$, $(X(x),\nabla X,\nabla^2 X(x))$ is centered Gaussian with covariance
\[
\Cov\Bigl((X,X_i,X_{ij}),(X,X_k,X_{kl})\Bigr)
= \begin{pmatrix}
 1 & 0 & C'(0)\delta_{kl} \\
 0 & -C'(0)\delta_{ik} & 0 \\
 C'(0)\delta_{ij} & 0 & C''(0)(\delta_{ik}\delta_{jl}+\delta_{ij}\delta_{kl}+\delta_{il}\delta_{jk})
\end{pmatrix}.
\]
Therefore, for each $x$, $(X(x),\nabla^2 X(x))$ and $\nabla X(x)$ are independent.
It is shown that this matrix is positive semi-definite if and only if
\[
 C'(0) \le 0, \quad C''(0)\ge \frac{d}{d+2} C'(0)^2.
\]
\citep[Eqs.\,(2.7), (2.8)]{Kuriki-Matsubara:2023}.

Here, the case $C'(0)=0$ holds if and only if $X_i(x)\equiv 0$ a.s., meaning that $X(x)$ is a random variable independent of $x$.
We exclude this trivial case and assume $C'(0)<0$.
The other boundary case,
\[
 C'(0) < 0, \quad C''(0) = \frac{d}{d+2} C'(0)^2
\]
was shown to be attained by \citet[Example 3.12]{Cheng-Schwartzman:2018}.
This is a degenerate case where $\sum_{i=1}^d X_{ii}(x)$ is proportional to $X(x)$.

In what follows, we fix a point $x$ and omit the argument $x$ from $X(x)$, $\nabla X(x)$, and $\nabla^2 X(x)$.
Let
\[
 B = (b_{ij})_{1\le i,j\le d} = \frac{1}{\sqrt{2 C''(0)}}\left(\nabla^2 X - C'(0) X I_d\right)
\]
or
\[
 \nabla^2 X = \sqrt{2 C''(0)} \left(B - \sqrt{\frac{C'(0)^2}{2 C''(0)}} X I_d\right).
\]
Then, $X$, $\nabla X$, $B$ are mutually independent.
Under the conditional distribution given $X=\nu$,
the elements of $B=(b_{ij})$ are jointly Gaussian with
$\E[b_{ij}]=0$ and
\begin{equation}
\label{GOI}
 \E[b_{ij}b_{kl}] = \frac{1}{2}(\delta_{ik}\delta_{jl} + \delta_{il}\delta_{jk}) + \sigma \delta_{ij}\delta_{kl},
\end{equation}
where
\begin{equation}
\label{sigma}
 \sigma = \frac{C''(0)-C'(0)^2}{2 C''(0)}
 \in \left[-\frac{1}{d},\frac{1}{2}\right).
\end{equation}
Recall that we defined
\begin{equation}
\label{gamma}
 \gamma = -C'(0) \in (0,\infty).
\end{equation}

Then, the $f_{m,d}(\nu)$ in (\ref{fmd}) is rewritten as 
\[
  f_{m,d}(\nu)
= \left(\frac{\gamma}{2\pi}\right)^{d/2} \Bigl(\frac{1-2\sigma}{2}\Bigr)^{-d/2}
 \phi(\nu) g_{m,d}\biggl(\nu\sqrt{\frac{1-2\sigma}{2}};\sigma\biggr)
\]
where
\begin{equation}
\label{g}
 g_{m,d}(c;\sigma) = (-1)^m \E_\sigma[\det(B-c I_d) \1\{\ind(B-c I_d)=m\}]. 
\end{equation}
(The parameter $\sigma$ determines the distribution of $B$.)
Similarly,
\begin{equation}
\label{FG}
  F_d(\nu;z)
= \left(\frac{\gamma}{2\pi}\right)^{d/2} \Bigl(\frac{1-2\sigma}{2}\Bigr)^{-d/2}
 \phi(\nu) G_d\biggl(\nu\sqrt{\frac{1-2\sigma}{2}};z;\sigma\biggr)
\end{equation}
where
\begin{equation}
\label{G}
 G_d(c;z;\sigma) = \sum_{m=0}^d z^m g_{m,d}(c;\sigma).
\end{equation}

\subsection{GOI and GOE random matrices}
\label{subsec:goi-goe}

The $d\times d$ centered Gaussian random matrix $B$ with covariance structure (\ref{GOI}) is referred to as the Gaussian orthogonally invariant (GOI) random matrix,
and its probability law is denoted by $\mathtt{GOI}_d(\sigma)$ (\cite{Cheng-Schwartzman:2018}).
For independent $B\sim \mathtt{GOI}_d(\sigma)$ and $\xi\sim\mathcal{N}(0,1)$, it holds that
\begin{equation}
\label{semi_group}
 B + \sqrt{\alpha} \xi I_d \mathop{=}^d B'\sim \mathtt{GOI}_d(\sigma+\alpha)\quad (\alpha\ge 0),
\end{equation}
where $\mathop{=}^d$ denotes equality in distributions.

The special case $\mathtt{GOI}_d(0)$ is referred to as the Gaussian orthogonal ensemble (GOE) and is denoted by $\mathtt{GOE}_d$.
The GOE matrix $B=(b_{ij})\sim \mathtt{GOE}_d$ is a symmetric random matrix such that
\[
 b_{ii}\sim \mathcal{N}(0,1), \quad b_{ij}=b_{ji}\sim \mathcal{N}(0,1/2) \ \ (i\ne j) \ \ \mbox{(independently)}. 
\]
The joint density of $B$ is proportional to $e^{-\tr B^2/2}$.
The density function of the ordered eigenvalues $\lambda_1<\cdots<\lambda_d$ of $B$ is
\begin{equation}
\label{pdf_eigen}
\frac{1}{c_d}\prod_{i>j}(\lambda_i-\lambda_j)e^{-\sum\lambda_i^2/2}, \qquad c_d = 2^{d/2}\prod_{i=1}^d\Gamma(i/2).
\end{equation}

For $B\sim\mathtt{GOE}_d$, let
\begin{equation*}
 \bar B = (\bar b_{ij}) = B-\frac{1}{d}\tr(B)I_d.
\end{equation*}
Then, $\bar B$ is distributed as $\mathtt{GOI}_d(-1/d)$ and is independent of $\tr(B)\sim\mathcal{N}(0,d)$.

\subsection{Pfaffian}
\label{subsec:pffafian}

Here we summarize the pfaffian, which is used to describe our main results in Section \ref{sec:main}.
For an $n\times n$ ($n$:even) skew-symmetric matrix $A=(a_{ij})$, the pfaffian of $A$ is defined to be
\[
 \pf A = \sum_{\pi}\sgn(\pi) a_{\pi(1),\pi(2)} \cdots a_{\pi(n-1),\pi(n)},
\]
where $\pi$ runs over all pairings of $\{1,\ldots,n\}$.
Recall that the pairing $(\pi(1),\ldots,\pi(n))$ is a permutation of $(1,\ldots,n)$ satisfying
\[
 \pi(1)<\pi(2),\ldots,\pi(n-1)<\pi(n) \quad\mbox{and}\quad \pi(1)<\pi(3)<\cdots\pi(n-1).
\]
The signature $\sgn(\pi)$ is defined as the signature of the permutation $\pi$.

For an $n\times n$ matrix $B$, the following identity holds:
\[
 \pf(B^\top A B) = \det(B)\pf(A),
\]
which enables Gaussian elimination by taking $B$ to be an upper or lower triangular matrix.
For example, for a skew-symmetrix matrix
\[
 A = \begin{pmatrix} A_{11} & A_{12} \\ -A_{12}^\top & A_{22} \end{pmatrix}
 \quad \mbox{such that}\ \ \pf(A_{11})\ne 0,
\]
 we have
\begin{align*}
\pf(A) =
\pf\begin{pmatrix} I & 0 \\ A_{12}^\top A_{11}^{-1} & I \end{pmatrix}
\begin{pmatrix} A_{11} & A_{12} \\ -A_{12}^\top & A_{22} \end{pmatrix}
\begin{pmatrix} I & -A_{11}^{-1}A_{12} \\ 0 & I \end{pmatrix} =&
\pf\begin{pmatrix} A_{11} & 0 \\ 0 & A_{22\cdot 1} \end{pmatrix} \\
=&
\pf(A_{11})\pf(A_{22\cdot 1}),
\end{align*}
where $A_{22\cdot 1}=A_{22}+A_{12}^\top A_{11}^{-1}A_{12}$.
Another simple implication is
\begin{equation*}
\pf(B A B)=\left(\prod b_i\right)\pf(A)\quad\mbox{for }B=\diag(b_i).
\end{equation*}

For an $n\times n$ skew-symmetric invertible matrix $A$, and $n\times 1$ column vectors $b$ and $c$,
\begin{equation}
\label{2rank} 
 \pf(A+b c^\top-c b^\top) = \pf(A)(1-b^\top A^{-1}c).
\end{equation}

The pfaffian can be defined without using the pairing.
Let $V$ be a linear space with basis $e_1,e_2,\ldots$,
and consider the exterior algebra $\bigwedge(V)$ with the product
$e_i\wedge e_j=-e_j\wedge e_i$. 
Let
\[
 \omega_A = \sum_{1\le i<j\le n} a_{ij} e_i\wedge e_j. 
\]
Then,
\[
 \frac{1}{(n/2)!} (\omega_A)^{(n/2)} = \pf(A) e_1\wedge e_2 \wedge \cdots \wedge e_n.
\]

\section{Main results}
\label{sec:main}

The goal of this section is to derive $G_d(c;z;\sigma)$ for all $\sigma$.
This immediately yields a formula for $F_d(\nu;z)$.
Our derivation consists of two steps.

In Section \ref{subsec:de_Bruijn},
we prepare a variant of de Bruijn's theorem \cite{deBruijn:1955},
and evaluate $G_d(c;z;0)$ (i.e., the case where $B\sim\mathtt{GOE}_d$) in Theorem \ref{thm:G_goe}.
In Section \ref{subsec:weierstrass}, we establish the inversion of the Weierstrass transform, and demonstrate that $G_d(c;z;\sigma)$ for any $\sigma$ is expressed as the Weierstrass transform, or the inverse of the Weierstrass transform of $G_d(c;z;0)$.
The resulting formulas are summarized in Theorem \ref{thm:main}.
In Section \ref{subsec:dunnett}, we show that this approach is useful for probability calculations in multiple comparisons.

\subsection{A variant of de Bruijn's theorem}
\label{subsec:de_Bruijn}

The formula we use is summarized as follows.

\begin{lemma}[A variant of de Bruijn's theorem]
\label{lem:de_Bruijn}

Let $h(\cdot)$ be a function such that
$\int_{-\infty}^\infty (1+|\lambda|^d) |h(\lambda)|\,\dd\lambda<\infty$.
Then,
\begin{equation}
\label{de_Bruijn}
\begin{aligned}
& \int_{-\infty<\lambda_1<\cdots<\lambda_d<\infty}
 \prod_{1\le i<j\le d}(\lambda_j-\lambda_i)\prod_{1\le i\le d}(\lambda_i-c)
 \prod_{i=1}^d h(\lambda_i) \prod_{i=1}^d \dd\lambda_i \\
&\qquad = \begin{cases}\displaystyle
\pf\begin{pmatrix} V & p(c) \\
 - p(c)^\top & 0 \end{pmatrix} & (d: \mbox{even}), \\[4mm]
  \displaystyle
\pf\begin{pmatrix} V & W & p(c) \\
 -W^\top & 0 & 1 \\
 - p(c)^\top & -1 & 0 \end{pmatrix} -\pf(V) & (d: \mbox{odd}),
\end{cases}
\end{aligned}
\end{equation}
where $p(c)=(p_0(c),p_1(c),\ldots,p_d(c))^\top$
with $p_i(\cdot)$ a monic polynomial of degree $i$,
$V=(v_{ij})_{1\le i,j\le d+1}$ with
\[
 v_{ij} = \int_{-\infty<\lambda_1<\lambda_2<\infty}
 \det\begin{pmatrix}
  p_{i-1}(\lambda_1) & p_{j-1}(\lambda_1) \\
  p_{i-1}(\lambda_2) & p_{j-1}(\lambda_2)
  \end{pmatrix}
  h(\lambda_1) h(\lambda_2)\,\dd\lambda_1\,\dd\lambda_2,
\]
and
$W=(w_1,\ldots,w_{d+1})^\top$ with
\[
 w_i = \int_{-\infty}^\infty p_{i-1}(\lambda) h(\lambda)\,\dd\lambda.
\]
\end{lemma}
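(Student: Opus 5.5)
The plan is to reduce the left-hand side of (\ref{de_Bruijn}) to the classical de Bruijn identity by writing the integrand as a single $(d+1)\times(d+1)$ determinant. Put $x_i=\lambda_i$ for $i\le d$ and $x_{d+1}=c$. Since each $p_k$ is monic of degree $k$, column operations give the Vandermonde identity
\[
 \det\bigl(p_{j-1}(x_i)\bigr)_{1\le i,j\le d+1}=\prod_{1\le i<j\le d+1}(x_j-x_i)
 =\prod_{1\le i<j\le d}(\lambda_j-\lambda_i)\prod_{i=1}^d(c-\lambda_i).
\]
Hence the integrand equals $(-1)^d\det\bigl(p_{j-1}(x_i)\bigr)\prod_i h(\lambda_i)$. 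The factor $(-1)^d$ will be tracked together with the other signs at the end.

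Next I expand this determinant along its last row $(p_0(c),\ldots,p_d(c))$:
\[
 \det=\sum_{k=1}^{d+1}(-1)^{d+1+k}p_{k-1}(c)\,D_k(\lambda),
\]
where $D_k(\lambda)=\det\bigl(p_{j-1}(\lambda_i)\bigr)_{i\le d,\ j\ne k}$. Each $D_k$ is a $d\times d$ determinant in the ordered variables $\lambda_1<\cdots<\lambda_d$. I integrate term by term, which is justified by the moment condition $\int(1+|\lambda|^d)|h|<\infty$, since every integrand is a polynomial of degree at most $d$ in each $\lambda_i$ times $\prod h(\lambda_i)$. De Bruijn's theorem \cite{deBruijn:1955} evaluates each integral:
\begin{itemize}
\item for $d$ even, the integral of $D_k\prod h$ equals $\pf(V_{\hat k})$, where $V_{\hat k}$ is $V$ with row and column $k$ deleted;
\item for $d$ odd, it equals $\pf\begin{pmatrix}V_{\hat k}&W_{\hat k}\\-W_{\hat k}^\top&0\end{pmatrix}$.
\end{itemize}

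The last step is to recognize the resulting sum as the Laplace-type expansion of a bordered pfaffian along its last row and column. For a skew matrix $A$ of even size $n$,
\[
 \pf A=\sum_{k<n}(-1)^{k+n+1}a_{kn}\pf(A_{\hat k\hat n}).
\]
\begin{itemize}
\item For $d$ even, applying this to $\begin{pmatrix}V&p(c)\\-p(c)^\top&0\end{pmatrix}$ reproduces the sum $\sum_k\pm p_{k-1}(c)\pf(V_{\hat k})$ directly.
\item For $d$ odd, I expand $\begin{pmatrix}V&W&p(c)\\-W^\top&0&1\\-p(c)^\top&-1&0\end{pmatrix}$ along its last row. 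The entries $p_{k-1}(c)$ give exactly the de Bruijn terms with the $W$-border. The extra entry $1$ in position $(d+2,d+3)$ deletes both border rows and columns and contributes $\pm\pf(V)$. This accounts for the subtracted $\pf(V)$ in (\ref{de_Bruijn}).
\end{itemize}

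I expect the main obstacle to be the sign bookkeeping: combining $(-1)^d$, the cofactor signs $(-1)^{d+1+k}$, and the pfaffian expansion signs $(-1)^{k+n+1}$, with $n=d+2$ for $d$ even and $n=d+3$ for $d$ odd. My first check will be small cases ($d=1$, where the left side is $\int(\lambda-c)h$, and $d=2$) to fix the conventions. In the odd case I also need to confirm that the $\pf(V)$ term enters with a minus sign.
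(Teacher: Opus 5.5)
Your proposal is correct, and it takes a different route from the paper. The paper does not use de Bruijn's theorem as a black box. It proves an ``incomplete'' de Bruijn integral (Theorem \ref{thm:incomplete}) from scratch in the exterior algebra. There the integrated $d$-vector is wedged with the un-integrated vector $\phi(x_{d+1})$ and auxiliary basis vectors $e_{d+2}$ (and $e_{d+3}$), de Bruijn's pairing trick is applied, and a single pfaffian is read off from $\omega_A^{n/2}/(n/2)!$. In the odd case the $-\pf(A)$ term comes from the binomial expansion of $(\Omega+\Omega_1\wedge e_{d+2}+\phi(x_{d+1})\wedge e_{d+3}+e_{d+2}\wedge e_{d+3})^{(d+3)/2}$. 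The paper then applies this with $\phi_i=p_{i-1}h$ and $x_{d+1}=c$. That puts a factor $h(c)$ in the border, so it must restrict to $c$ in the support of $h$, divide by $h(c)$, and extend by polynomiality in $c$.

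Your cofactor-expansion and pfaffian-expansion argument avoids that detour, since your last row is just $p(c)$. It also makes the origin of the subtracted $\pf(V)$ transparent, at the cost of explicit sign bookkeeping. That bookkeeping does close:
\begin{itemize}
\item $(-1)^d(-1)^{d+1+k}=(-1)^{k+1}$ agrees with $(-1)^{k+n+1}$, because $n$ is even in both parities.
\item The entry $1$ at position $(d+2,d+3)$ contributes $(-1)^{(d+2)+(d+3)+1}\pf(V)=+\pf(V)$, which is exactly why it must be subtracted.
\end{itemize}
Your $d=1$ check confirms this. Your moment-condition justification for term-by-term integration, and the match between $v_{ij}$ and de Bruijn's $\iint\sgn(y-x)\phi_i(x)\phi_j(y)$, are also fine.

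The paper's approach gives a self-contained proof with the signs absorbed into the wedge calculus. Yours is more modular: it reuses the classical theorem plus elementary expansions. It also yields the same generality as Theorem \ref{thm:incomplete}, because the Laplace expansion works for an arbitrary last row.
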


\begin{proof}
Suppose that the support $D=\{x\in\R \,|\, h(x)\ne 0\}$ has a positive Lebesgue measure.
(Otherwise (\ref{de_Bruijn}) holds trivially.)
Let $c\in D$.
We apply Theorem \ref{thm:incomplete} with $\phi_i(x)=p_{i-1}(x)h(x)$ and $x_{d+1}=c$.
Since the linkage factor is represented as the Vandermonde's determinant
\[
 \prod_{1\le i<j\le d+1}(x_j-x_i) = \det(x_j^{i-1})_{1\le i,j\le d+1} = \det(p_{i-1}(x_j))_{1\le i,j\le d+1},
\]
the integrand of (\ref{I}) is
\begin{align*}
 \det\bigl(\phi_i(x_j)\bigr)_{1\le i,j\le d+1}
 &= \prod_{1\le i<j\le d+1}(x_j-x_i) \prod_{i=1}^{d+1} h(x_i) \\
 &= (-1)^d \prod_{1\le i<j\le d}(x_j-x_i)\prod_{i=1}^d (x_i-c) \prod_{i=1}^{d} h(x_i) \times h(c),
\end{align*}
which is proportional to the integrand in (\ref{de_Bruijn}).
By dividing both sides of (\ref{I}) by $h(c)$, we obtain the formula (\ref{de_Bruijn}) for $c\in D$.

Since both sides of (\ref{de_Bruijn}) are polynomials in $c$, (\ref{de_Bruijn}) holds for all $c\in\R$.
\end{proof}

Let $B$ be a $d\times d$ GOE matrix (i.e., $\mathtt{GOI}(\sigma)$ with $\sigma=0$), and let $\lambda_1<\cdots<\lambda_d$ be its ordered eigenvalues.
Since
\begin{align*}
 \prod_{i=1}^d \bigl\{ \1(\lambda_i \ge c) -z \1(\lambda_i < c) \bigr\}
 &= \sum_{m=0}^d (-z)^m \1\{\#\{i: \lambda_i < c \}=m\} \\
 &= \sum_{m=0}^d (-z)^m \1\{\ind(B-cI)=m\},
\end{align*}
we have
\begin{equation}
\label{Gdcz}
 G_d(c;z;0) = \E\left[\prod_{i=1}^d (\lambda_i-c) \prod_{i=1}^d \bigl\{ \1(\lambda_i \ge c) -z\1(\lambda_i < c) \bigr\} \right]
\end{equation}
where the expectation is taken with respect to the density function in (\ref{pdf_eigen}).

By applying Lemma \ref{lem:de_Bruijn} with
\[
 h(\lambda) = \bigl\{\1(\lambda \ge c) -z\1(\lambda < c) \bigr\} e^{-\lambda^2/2},
\]
we obtain the explicit formula for $G_d(c;z;0)$.

\begin{theorem}
\label{thm:G_goe}
\begin{align*}
& G_d(c;z;0) = \\
& \frac{1}{c_d} \begin{cases}
 \displaystyle
 \pf\begin{pmatrix} V^+(c) + z^2 V^-(c) -z \bigl(w^-(c)w^+(c)^\top - w^+(c)w^-(c)^\top\bigr)  & p(c) \\ -p(c)^\top & 0 \end{pmatrix} & \hspace*{-4em}(d {\rm :even}), \\[3ex]
 \displaystyle
 \pf\begin{pmatrix}
  V^+(c) + z^2 V^-(c) -z \bigl(w^-(c)w^+(c)^\top - w^+(c)w^-(c)^\top\bigr) & w^+(c)-z w^-(c) & p(c) \\
 -\bigl(w^+(c) -z w^-(c)\bigr)^\top & 0  & 1 \\
 -p(c)^\top                         & -1 & 0
\end{pmatrix} \\
\quad - \displaystyle
 \pf\begin{pmatrix}
  V^+(c) + z^2 V^-(c) -z \bigl(w^-(c)w^+(c)^\top - w^+(c)w^-(c)^\top\bigr)
\end{pmatrix}
& \hspace*{-4em}(d {\rm :odd})
\end{cases}
\end{align*}
where
$p(c)=(p_0(c),p_1(c),\ldots,p_d(c))^\top$ with $p_i(\cdot)$ a monic polynomial of degree $i$,
$V^\pm(c)=(v^\pm_{ij}(c))_{1\le i,j\le d+1}$ with
\begin{align*}
v^+_{ij}(c) =& \int_{c<\lambda_1<\lambda_2}
 \det\begin{pmatrix}
  p_{i-1}(\lambda_1) & p_{j-1}(\lambda_1) \\
  p_{i-1}(\lambda_2) & p_{j-1}(\lambda_2)
  \end{pmatrix}
  e^{-(\lambda_1^2+\lambda_2^2)/2}\,\dd\lambda_1\,\dd\lambda_2, \\
v^-_{ij}(c) =& \int_{\lambda_1<\lambda_2<c}
 \det\begin{pmatrix}
  p_{i-1}(\lambda_1) & p_{j-1}(\lambda_1) \\
  p_{i-1}(\lambda_2) & p_{j-1}(\lambda_2)
  \end{pmatrix}
  e^{-(\lambda_1^2+\lambda_2^2)/2}\,\dd\lambda_1\,\dd\lambda_2,
\end{align*}
$w^\pm(c)=(w^\pm_1(c),\ldots,w^\pm_{d+1}(c))^\top$
with
\[
 w^+_i(c) = \int_{c}^\infty p_{i-1}(\lambda) e^{-\lambda^2/2}\,\dd\lambda, \qquad
 w^-_i(c) = \int^{c}_{-\infty} p_{i-1}(\lambda) e^{-\lambda^2/2}\,\dd\lambda.
\]
The constant $c_d$ is given in (\ref{pdf_eigen}).
\end{theorem}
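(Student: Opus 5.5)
The plan is to substitute the indicated choice of $h$ into Lemma \ref{lem:de_Bruijn} and then identify the entries $V$ and $W$ of the lemma with the quantities $V^\pm(c)$, $w^\pm(c)$. Starting from (\ref{Gdcz}), write the expectation against the eigenvalue density (\ref{pdf_eigen}) as
\[
 \frac{1}{c_d}\int_{\lambda_1<\cdots<\lambda_d}\prod_{i<j}(\lambda_j-\lambda_i)\prod_{i=1}^d(\lambda_i-c)\prod_{i=1}^d h(\lambda_i)\,\dd\lambda,
 \qquad h(\lambda)=\bigl\{\1(\lambda\ge c)-z\1(\lambda<c)\bigr\}e^{-\lambda^2/2}.
\]
This $h$ has Gaussian tails, so the integrability hypothesis $\int(1+|\lambda|^d)|h|<\infty$ holds. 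Lemma \ref{lem:de_Bruijn} then applies directly with the same parameter $c$ appearing both in $h$ and in the factor $\prod(\lambda_i-c)$; the lemma is an identity for arbitrary $c$ and fixed $h$, so letting $h$ depend on $c$ causes no problem. This gives $c_d G_d(c;z;0)$ equal to the right-hand side of (\ref{de_Bruijn}), with $V,W$ built from this $h$.

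Next, compute $W$ by splitting the integral at $c$: $w_i=\int_c^\infty p_{i-1}e^{-\lambda^2/2}\,\dd\lambda - z\int_{-\infty}^c p_{i-1}e^{-\lambda^2/2}\,\dd\lambda = w_i^+(c)-zw_i^-(c)$, which is the claimed column $w^+(c)-zw^-(c)$.

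For $V$, split the region $\{\lambda_1<\lambda_2\}$ into three parts: $c\le\lambda_1<\lambda_2$, $\lambda_1<\lambda_2<c$, and $\lambda_1<c\le\lambda_2$. On these parts $h(\lambda_1)h(\lambda_2)$ equals $e^{-(\lambda_1^2+\lambda_2^2)/2}$ times $1$, $z^2$, and $-z$ respectively. The first two parts give $v^+_{ij}(c)$ and $z^2v^-_{ij}(c)$ by definition. On the mixed region the domain factorizes, and expanding the $2\times2$ determinant $p_{i-1}(\lambda_1)p_{j-1}(\lambda_2)-p_{j-1}(\lambda_1)p_{i-1}(\lambda_2)$ gives
\[
 -z\bigl(w^-_i(c)w^+_j(c)-w^-_j(c)w^+_i(c)\bigr),
\]
which is the $(i,j)$ entry of $-z\bigl(w^-(c)w^+(c)^\top-w^+(c)w^-(c)^\top\bigr)$. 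The boundary set $\lambda_i=c$ has measure zero, so the choice of strict versus non-strict inequalities is irrelevant. Substituting these expressions for $V$ and $W$ into the even and odd cases of (\ref{de_Bruijn}) yields exactly the two displayed formulas, including the subtracted $\pf(V)$ term in the odd case.

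The only point needing care is that the proof of Lemma \ref{lem:de_Bruijn} took $c\in D$ and then extended to all $c$ by polynomiality in $c$ with $h$ held fixed. Here $h$ depends on $c$, so one should first fix $h$ with a parameter $c_0$ in place of $c$, apply the lemma as an identity in $c$, and then set $c=c_0$. Since $D=\R$ when $z\ne0$, this is immediate. When $z=0$, the polynomial identity in $c$ still holds for the fixed $h$, so the same evaluation at $c=c_0$ works. Beyond this, the argument is bookkeeping of signs in the mixed-region term, which I expect to be the main, though minor, obstacle.
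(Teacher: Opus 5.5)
Your proposal is correct and matches the paper's proof: apply Lemma~\ref{lem:de_Bruijn} with $h(\lambda)=\{\1(\lambda\ge c)-z\1(\lambda<c)\}e^{-\lambda^2/2}$, split $w_i$ at $c$, and split $v_{ij}$ over the three regions to get $v_{ij}=v^+_{ij}-z(w^-_iw^+_j-w^+_iw^-_j)+z^2v^-_{ij}$, exactly as in (\ref{vij}). Your remark about $h$ depending on $c$ is a harmless refinement that the paper leaves implicit; it is even simpler than you suggest, since $h(c)=e^{-c^2/2}\neq 0$ for every $z$, so $c\in D$ directly.
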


\begin{proof}
The $(i,j)$th element of $V$ in Lemma \ref{lem:de_Bruijn} is
\begin{equation}
\label{vij}
\begin{aligned}
 v_{ij} &= v_{ij}(c;z) \\
 &= \left(\int_{c<\lambda_1<\lambda_2} - z\int_{\lambda_1<c<\lambda_2} + z^2 \int_{\lambda_1<\lambda_2<c}\right)
 \det\begin{pmatrix}
  p_{i-1}(\lambda_1) & p_{j-1}(\lambda_1) \\
  p_{i-1}(\lambda_2) & p_{j-1}(\lambda_2)
  \end{pmatrix}
  e^{-(\lambda_1^2+\lambda_2^2)/2}\,\dd\lambda_1\,\dd\lambda_2 \\[1ex]
&= v^+_{ij}(c) - z (w^-_{i}(c) w^+_{j}(c)-w^+_{i}(c)w^-_{j}(c)) + z^2 v^-_{ij}(c),
\end{aligned}
\end{equation}
and the $i$th element of $W$ in Lemma \ref{lem:de_Bruijn} is
\begin{align*}
w_i = w_i(c;z) = \left(\int_{c}^\infty -z \int_{-\infty}^c \right)
 p_{i-1}(\lambda) e^{-\lambda^2/2}\,\dd\lambda
= w^+_i(c) -z w^-_i(c).
\end{align*}
\end{proof}

In this paper, we present the formulas using the complementary error function
\[
 \erfc(x) = \frac{2}{\sqrt{\pi}}\int_x^\infty e^{-t^2}\,\dd t = 1-\erf(x), 
\]
where $\erf$ is the error function.
This is an entire function on $\C$ and satisfies
\[
 \erfc(x) + \erfc(-x) = 2, \qquad \erfc(0)=1, \qquad \erfc(\pm\infty) = 1\mp 1.
\]
The upper tail probability of a standard Gaussian random variable $\xi\sim\mathcal{N}(0,1)$ is
\[
 \Pr(\xi > x) = \frac{1}{2}\erfc\bigl(x/\sqrt{2}\bigr).
\]

\begin{lemma}
Suppose that $p_i(c)=c^{i}$ and $p(c)=(1,c,\ldots,c^d)^\top$ in Theorem \ref{thm:G_goe}.
Then
\begin{align*}
& v^+_{ij}(c) = \int_{c<\lambda_1<\lambda_2}
 \det\begin{pmatrix}
  \lambda_1^{i-1} & \lambda_1^{j-1} \\
  \lambda_2^{i-1} & \lambda_2^{j-1}
  \end{pmatrix}
  e^{-(\lambda_1^2+\lambda_2^2)/2}\,\dd\lambda_1\,\dd\lambda_2,
\end{align*}
and
\[
 w^+_i(c) = \int_{c}^\infty \lambda^{i-1} e^{-\lambda^2/2}\,\dd\lambda
\]
are evaluated according to the recurrence formulas below.

For $1\le i<j$,
\begin{align*}
 v^+_{i,j}(c)
=& -c^{j-2} e^{-c^2/2} w^+_{i}(c) + \frac{1}{2^{(i+j-4)/2}} w^+_{i+j-2}(\sqrt{2}c) \\
&+ \begin{cases}
(j-2)v^+_{i,j-2}(c) & (j\ge i+3), \\
0 & (j=i+2 \mbox{ or } (i,j)=(1,2)), \\
-(j-2)v^+_{j-2,i}(c) & (j=i+1 \mbox{ and }(i,j)\ne (1,2)),
\end{cases}
\end{align*}
and for $1\le i$,
\begin{align*}
w^+_{i}(c)
=& \begin{cases}
\displaystyle
c^{i-2} e^{-c^2/2} + (i-2) w^+_{i-2}(c) & (i\ge 3), \\
\displaystyle
e^{-c^2/2} & (i=2), \\
\displaystyle
\int_c^{\infty} e^{-\lambda^2/2}\,\dd\lambda = \sqrt{\frac{\pi}{2}}\erfc\bigl(c/\sqrt{2}\bigr) & (i=1).
\end{cases}
\end{align*}
$v^-_{ij}(c)$ and $w^-_i(c)$ are obtained as
\[
 v^-_{ij}(c)=(-1)^{i+j-1}v^+_{ij}(-c), \qquad
 w^-_i(c)=(-1)^{i-1}w^+_i(-c).
\]
\end{lemma}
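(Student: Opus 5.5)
The plan is to derive both recurrences by one integration by parts against $\lambda e^{-\lambda^2/2}$, and to get the $v^-,w^-$ formulas by the reflection $\lambda\mapsto-\lambda$.

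\emph{The $w^+$ recurrence.} The cases $i=1,2$ are direct: for $i=1$ use the definition of $\erfc$ after substituting $\lambda=\sqrt2 t$, and for $i=2$ integrate $\lambda e^{-\lambda^2/2}$ explicitly. For $i\ge3$, write $\lambda^{i-1}e^{-\lambda^2/2}=\lambda^{i-2}\cdot\lambda e^{-\lambda^2/2}$ and integrate by parts on $(c,\infty)$. The boundary term is $c^{i-2}e^{-c^2/2}$, and the remaining integral is $(i-2)w^+_{i-2}(c)$.

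\emph{The $v^+$ recurrence.} Fix $1\le i<j$, so $j\ge2$. Split $v^+_{ij}=A_{ij}-B_{ij}$, where
\[
A_{ij}=\int_{c<\lambda_1<\lambda_2}\lambda_1^{i-1}\lambda_2^{j-1}e^{-(\lambda_1^2+\lambda_2^2)/2}\,\dd\lambda_1\,\dd\lambda_2,
\qquad
B_{ij}=\int_{c<\lambda_1<\lambda_2}\lambda_2^{i-1}\lambda_1^{j-1}e^{-(\lambda_1^2+\lambda_2^2)/2}\,\dd\lambda_1\,\dd\lambda_2 .
\]
In each term, the higher power $j-1$ is the one to reduce.

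For $A_{ij}$, do the inner $\lambda_2$-integral over $(\lambda_1,\infty)$ by the same integration by parts as for $w^+$. This gives
\[
A_{ij}=\int_c^\infty\lambda^{i+j-3}e^{-\lambda^2}\,\dd\lambda+(j-2)A_{i,j-2}.
\]

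For $B_{ij}$, do the inner $\lambda_1$-integral over $(c,\lambda_2)$ by parts. The boundary term at $c$ produces $c^{j-2}e^{-c^2/2}w^+_i(c)$, and the boundary term at $\lambda_2$ produces $-\int_c^\infty\lambda^{i+j-3}e^{-\lambda^2}\,\dd\lambda$. The remainder is $(j-2)B_{i,j-2}$.

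Subtracting the two expansions, the two single integrals combine into $2\int_c^\infty\lambda^{i+j-3}e^{-\lambda^2}\,\dd\lambda$. The substitution $\lambda=\mu/\sqrt2$ turns this into $2^{-(i+j-4)/2}w^+_{i+j-2}(\sqrt2c)$. The term $-c^{j-2}e^{-c^2/2}w^+_i(c)$ appears with the right sign, and the remainder is $(j-2)(A_{i,j-2}-B_{i,j-2})=(j-2)v^+_{i,j-2}(c)$.

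The case split then follows from antisymmetry $v^+_{kl}=-v^+_{lk}$. If $j-2>i$ we keep $v^+_{i,j-2}$. If $j-2=i$ the term vanishes. If $j-2=i-1$ it equals $-v^+_{j-2,i}$. For $(i,j)=(1,2)$ the factor $j-2$ is zero. When $j=2$ the inner integrations by parts involve $\lambda^{0}$ only, so no negative powers arise.

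\emph{The $v^-,w^-$ formulas.} Substitute $\lambda_k=-\mu_k$. For $w^-$, the factor $(-1)^{i-1}$ comes out of $\lambda^{i-1}$. For $v^-$, the region $\lambda_1<\lambda_2<c$ maps to $-c<\mu_2<\mu_1$; relabelling $\mu_1\leftrightarrow\mu_2$ swaps the rows of the determinant and contributes a factor $-1$. The columns contribute $(-1)^{(i-1)+(j-1)}$, so in total we get $(-1)^{i+j-1}v^+_{ij}(-c)$.

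The only delicate step is the bookkeeping in $B_{ij}$: one must check that the boundary term at $\lambda_1=\lambda_2$ enters with the sign that makes it add to, rather than cancel, the corresponding term from $A_{ij}$, producing the factor $2$. I would verify this by checking the case $(i,j)=(1,2)$ directly against the closed form.
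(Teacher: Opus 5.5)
Your proposal is correct and follows the paper's proof essentially step for step. You split $v^+_{ij}$ into the two iterated integrals and integrate by parts in the variable carrying the power $j-1$. You then collect the boundary terms into $-c^{j-2}e^{-c^2/2}w^+_i(c)+2\int_c^\infty\lambda^{i+j-3}e^{-\lambda^2}\,\dd\lambda$, rescale, and resolve the remainder $(j-2)v^+_{i,j-2}(c)$ by antisymmetry. Your check of the reflection formulas for $v^-_{ij}$ and $w^-_i$ via $\lambda\mapsto-\lambda$ also supplies a step the paper states without proof.
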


\begin{proof}
Suppose that $1\le i<j$.
By integration by parts,
\begin{align*}
 v^+_{i,j}(c)
=& \int_{c}^{\infty} \lambda_1^{i-1} e^{-\lambda_1^2/2}\,\dd\lambda_1 \int_{\lambda_1}^{\infty} \lambda_2^{j-1} e^{-\lambda_2^2/2}\,\dd\lambda_2 - \int_c^{\infty} \lambda_2^{i-1} e^{-\lambda_2^2/2}\,\dd\lambda_2 \int_{c}^{\lambda_2} \lambda_1^{j-1} e^{-\lambda_1^2/2}\,\dd\lambda_1 \\
=& \int_{c}^{\infty} \lambda_1^{i-1} e^{-\lambda_1^2/2}\,\dd\lambda_1 \left[ -\lambda_2^{j-2} e^{-\lambda_2^2/2} \bigg|_{\lambda_1}^\infty + (j-2)\int_{\lambda_1}^\infty \lambda_2^{j-3} e^{-\lambda_2^2/2}\,\dd\lambda_2 \right] \\
& - \int_c^{\infty} \lambda_2^{i-1} e^{-\lambda_2^2/2}\,\dd\lambda_2 \left[ -\lambda_1^{j-2} e^{-\lambda_1^2/2} \bigg|_{c}^{\lambda_2} + (j-2)\int_{c}^{\lambda_2} \lambda_1^{j-3} e^{-\lambda_1^2/2}\,\dd\lambda_1 \right] \\
=& -c^{j-2} e^{-c^2/2} w^+_{i}(c) + 2\int_c^{\infty} \lambda^{i+j-3} e^{-\lambda^2}\,\dd\lambda + (j-2) v^+_{i,j-2}(c) \\
=& -c^{j-2} e^{-c^2/2} w^+_{i}(c) + \frac{1}{2^{(i+j-4)/2}} w^+_{i+j-2}(\sqrt{2}c) \\[0.5ex]
&+ \begin{cases}
(j-2)v^+_{i,j-2}(c) & (j\ge i+3), \\[0.5ex]
0 & (j=i+2 \mbox{ or }j=2), \\[0.5ex]
-(j-2)v^+_{j-2,i}(c) & (j<i+2),
\end{cases}
\end{align*}
and
\begin{align*}
w^+_{i}(c)
=& -\lambda^{i-2} e^{-\lambda^2/2}\bigg|_{c}^{\infty} + (i-2) \int_{c}^{\infty} \lambda^{i-3} e^{-\lambda^2/2}\,\dd\lambda \\[0.5ex]
=& \begin{cases}
\displaystyle
c^{i-2} e^{-c^2/2} + (i-2) w^+_{i-2}(c) & (i\ge 2), \\[1ex]
\displaystyle
\int_c^{\infty} e^{-\lambda^2/2}\,\dd\lambda = \sqrt{\frac{\pi}{2}}\erfc\bigl(c/\sqrt{2}\bigr) & (i=1).
\end{cases}
\end{align*}
\end{proof}

\begin{remark}
When $z=-1$, $G_d(c;z;0)$ in (\ref{Gdcz}) becomes
\begin{equation}
\label{z=-1}
\begin{aligned}
 G_d(c;-1;0)
 =& \E\left[\prod_{i=1}^d(\lambda_i-c)\right] = \E[\det(B-c I_d)]
 \quad (B\sim\mathtt{GOE}_d) \\
 =& \frac{(-1)^d}{2^{d/2}} H_d(\sqrt{2}c)
\end{aligned}
\end{equation}
(see, e.g., \cite{Forrester:2013}).

When $z=1$, $G_d(c;z;0)$ in (\ref{Gdcz}) becomes
\begin{equation}
\label{z=1}
\begin{aligned}
 G_d\bigl(c;1;0\bigr) =& 
 \sqrt{\frac{1}{2\pi}}\,\left(\frac{d-1}{2}\right)!\Biggl[
 2\,e^{-c^2/2}
 \sum_{k=0}^{d}\frac{H_{k}^2(\sqrt{2}c)}{k!} \\
&\qquad\qquad\qquad\quad +\frac{1}{2^{1/2}d!}\,H_d(\sqrt{2}c)\,
\int_{-\infty}^{\infty} e^{-u^2/2}\,H_{d+1}(\sqrt{2}u)\,\sgn(c-u)\,\dd u
\Biggr] \\
& + \begin{cases}
\displaystyle
 \frac{1}{2^{d/2}}H_d(\sqrt{2}c) & (\mbox{$d$ is even}), \\[1ex]
 0 & (\mbox{$d$ is odd}).
\end{cases}
\end{aligned}
\end{equation}

The result for odd $d$ is given by \citet[Eq.\,(10)]{Fyodorov:2004}.
A sketch of the proof of (\ref{z=-1}) and (\ref{z=1}) when $d$ is even is  given in Appendix \ref{subsec:pm1}.
\end{remark}

\subsection{Weierstrass transform and its inversion}
\label{subsec:weierstrass}

In what follows, let $\xi$ and $\xi'$ be independent standard Gaussian random variables.

\begin{lemma}
\label{lem:negative_var}
Let $f(x)$ be a real-valued function on $\R$ such that $|f(x)|$ grows at most of polynomial order in $|x|$ as $x\to\pm\infty$.
Let
\begin{equation*}
 h(x;\alpha) = \E[f(x+\sqrt{\alpha}\xi)]
 = \int_{-\infty}^{\infty} f(\xi) \phi\biggl(\frac{\xi-x}{\sqrt{\alpha}}\biggr) \,\frac{\dd\xi}{\sqrt{\alpha}}, \quad x\in\R,\ \ \alpha>0,
\end{equation*}
where $\phi$ is the density function of $\mathcal{N}(0,1)$ defined in (\ref{phi-H}).
Then, for each $\alpha>0$, it extends to an entire function $h(z;\alpha)$, $z\in\C$.

For $\alpha, \beta>0$, it holds that
\begin{equation}
\label{equality}
 h(z;\alpha+\beta) = \E[h(z+\sqrt{\beta}\xi;\alpha)] \quad\mbox{for all } z\in\C.
\end{equation}

Moreover, for $\alpha, \beta>0$,
\begin{equation}
\label{equality1}
 h(z;\alpha) = \E[h(z+\sqrt{-1}\sqrt{\beta}\xi;\alpha+\beta)] \quad\mbox{for all } z\in\C
\end{equation}
holds.
\end{lemma}

\begin{proof}
Recall that
\[
 h(x;\alpha) = \int_{-\infty}^{\infty} f(\xi) \phi\biggl(\frac{\xi-x}{\sqrt{\alpha}}\biggr) \,\frac{\dd\xi}{\sqrt{\alpha}}
\]
is called the Weierstrass transform of $f$.
By \citet[Theorem 13.3]{Hirschman-Widder:1965}, $h(z;\alpha)$ exists as an entire function.

From the property (\ref{semi_group}), the first identity (\ref{equality}) holds for $z\in\R$.
(That is, $h(\cdot;\alpha)$ is the Weierstrass transform of $h(\cdot;\alpha')$, $\alpha'<\alpha$.)
By the identity theorem for holomorphic functions (\citet[Theorem 1.2 of Chapter 3]{Stein-Shakarchi:2003}), (\ref{equality}) holds for all $z\in\C$.

Substituting $z:=z+\sqrt{-1}\sqrt{\beta}\xi'$ into (\ref{equality}) and taking expectation with respect to $\xi'\sim\mathcal{N}(0,1)$, we have
\begin{align*}
\E[h(z+\sqrt{-1}\sqrt{\beta}\xi';\alpha+\beta)]
=& \int_{-\infty}^{\infty} \left[ \int_{-\infty}^{\infty} f(\xi) \phi\biggl(\frac{\xi-(z+\sqrt{-1}\sqrt{\beta}\xi')}{\sqrt{\alpha+\beta}}\biggr) \,\frac{\dd\xi}{\sqrt{\alpha+\beta}} \right] \phi(\xi') \dd\xi' \\
=& \int_{-\infty}^{\infty} f(\xi) \left[ \int_{-\infty}^{\infty} \phi\biggl(\frac{\xi-(z+\sqrt{-1}\sqrt{\beta}\xi')}{\sqrt{\alpha+\beta}}\biggr) \phi(\xi') \dd\xi' \right]\,\frac{\dd\xi}{\sqrt{\alpha+\beta}} \\
=& \int_{-\infty}^{\infty} f(\xi) \phi\biggl(\frac{\xi-z}{\sqrt{\alpha}}\biggr) \frac{\dd\xi}{\sqrt{\alpha}} = h(z;\alpha),
\end{align*}
that is, (\ref{equality1}) holds for all $z\in\C$.
The interchange of the integrals is justified by absolute integrability.
\end{proof}

\begin{remark}
When $f(x)$ is defined on $\C$, (\ref{equality1}) reads
\[
 \E[f(x+\sqrt{\alpha}\xi)] = \E[f(x+\sqrt{\alpha+\beta}\xi+\sqrt{-1}\sqrt{\beta}\xi')]. 
\]
The random variable $\sqrt{\alpha+\beta}\xi+\sqrt{-1}\sqrt{\beta}\xi'$, $\xi,\xi'\sim \mathcal{N}(0,1)$ i.i.d\ behaves like an $\mathcal{N}(0,\alpha)$.
\end{remark}

We now return to our problem.
We apply Lemma \ref{lem:negative_var} to the function $g_{m,d}(c;\sigma)$ defined in (\ref{g}).
Recall that $\sigma = -1/d$ is the possible minimum value of $\sigma$.
Note first that
\begin{align*}
 |g_{m,d}(c;-1/d)| \le& \E[|\det(B-cI)|], \quad B\sim\mathtt{GOI}(-1/d), \\
 \le& (\mbox{a polynomial in $|c|$ of degree $d$}).
\end{align*}
For any $\sigma\in(-1/d,1/2)$, $g_{m,d}(\cdot;\sigma)$ is represented as the Weierstrass transform of $g_{m,d}(\cdot;-1/d)$:
\[
 g_{m,d}(c;\sigma) = \E[g_{m,d}(c+\sqrt{\sigma+(1/d)}\,\xi;-1/d)].
\]
Therefore, $g_{m,d}(\cdot;\sigma)$ for $\sigma\in(-1/d,1/2)$ is an entire function, and is represented by $g_{m,d}(\cdot;0)$ as
\[
 g_{m,d}(c;\sigma) = \begin{cases}
 \displaystyle
 \E[g_{m,d}(c+\sqrt{\sigma}\xi;0)] & (\sigma>0)\ \ \mbox{by (\ref{equality})}, \\
 \displaystyle
 g_{m,d}(c;0) & (\sigma=0), \\
 \displaystyle
 \E[g_{m,d}(c+\sqrt{-1}\sqrt{-\sigma}\xi;0)] & (\sigma<0)\ \ \mbox{by (\ref{equality1})},
 \end{cases}
\]
or simply,
\begin{equation*}
 g_{m,d}(c;\sigma) = \E[g_{m,d}(c+\sqrt{\sigma}\xi;0)] \quad \mbox{for }\sigma\in(-1/d,1/2).
\end{equation*}
Similarly, for $G_d(c;z;\sigma)$ in (\ref{G}) we immediately have
\begin{equation}
\label{G_conv} 
 G_d(c;z;\sigma) = \E[G_{d}(c+\sqrt{\sigma}\xi;z;0)] \quad \mbox{for }\sigma\in(-1/d,1/2).
\end{equation}

By combining (\ref{FG}) and (\ref{G_conv}),
we obtain the main theorem:

\begin{theorem}
\label{thm:main}
The generating function $F_d(\nu;z)$ of the expected number density $f_{m,d}(\nu)$ is
\begin{equation}
\label{main}
  F_d(\nu;z)
= \Bigl(\frac{\gamma}{2\pi}\Bigr)^{d/2} \Bigl(\frac{1-2\sigma}{2}\Bigr)^{-d/2}
 \phi(\nu)
 \int_{-\infty}^\infty
 G_d\biggl(\nu\sqrt{\frac{1-2\sigma}{2}}+\sqrt{\sigma}\xi;z;0\biggr)
 \phi(\xi) \,\dd\xi,
\end{equation}
where $\gamma>0$ and $\sigma\in(-1/d,1/2)$ are defined in (\ref{sigma}), and $G_d(\cdot;z;0)$ is given in Theorem \ref{thm:G_goe}. 
\end{theorem}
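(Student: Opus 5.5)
The plan is to combine the Kac--Rice reduction (\ref{FG}) with the convolution identity (\ref{G_conv}), and to interpret the expectation over $\xi$ correctly when $\sigma<0$. First, under the regularity conditions of Proposition \ref{prop:kac-rice}, (\ref{FG}) expresses $F_d(\nu;z)$ through $G_d(c;z;\sigma)$ evaluated at $c=\nu\sqrt{(1-2\sigma)/2}$. The prefactor $(\gamma/2\pi)^{d/2}((1-2\sigma)/2)^{-d/2}\phi(\nu)$ is already exactly that of (\ref{main}). What remains is to show that
\[
G_d(c;z;\sigma)=\int_{-\infty}^\infty G_d(c+\sqrt{\sigma}\xi;z;0)\phi(\xi)\,\dd\xi
\]
holds for every $\sigma\in(-1/d,1/2)$ and real $c$.

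For this identity I will use the semigroup property (\ref{semi_group}). If $\bar B\sim\mathtt{GOI}_d(-1/d)$ and $\xi\sim\mathcal{N}(0,1)$ are independent, then $\bar B+\sqrt{\sigma+1/d}\,\xi I_d\sim\mathtt{GOI}_d(\sigma)$. Conditioning on $\xi$ and noting that $\bar B - c I_d + \sqrt{\sigma+1/d}\,\xi I_d = \bar B - (c-\sqrt{\sigma+1/d}\,\xi)I_d$, and that $\xi$ is symmetric, gives
\[
g_{m,d}(c;\sigma)=\E[g_{m,d}(c+\sqrt{\sigma+1/d}\,\xi;-1/d)].
\]
Next, $|g_{m,d}(\cdot;-1/d)|\le\E|\det(\bar B-cI)|$, and this is bounded by a polynomial of degree $d$ in $|c|$ by Hadamard's inequality together with Gaussian moment bounds. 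So $f=g_{m,d}(\cdot;-1/d)$ satisfies the hypothesis of Lemma \ref{lem:negative_var}, with $h(\cdot;\alpha)=g_{m,d}(\cdot;\alpha-1/d)$ and $\alpha=\sigma+1/d>0$. Each $g_{m,d}(\cdot;\sigma)$ therefore extends to an entire function. Summing over $m$ with weights $z^m$ transfers everything to $G_d$.

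I then split into cases on the sign of $\sigma$. If $\sigma>0$, apply (\ref{equality}) with $\alpha=1/d$ and $\beta=\sigma$. If $\sigma=0$, there is nothing to prove. If $-1/d<\sigma<0$, apply (\ref{equality1}) with $\alpha=\sigma+1/d$ and $\beta=-\sigma$, which expresses $h(c;\sigma+1/d)$ as $\E[h(c+\sqrt{-1}\sqrt{-\sigma}\xi;1/d)]$. This is the GOE function evaluated along the imaginary direction, that is, $\sqrt{\sigma}=\sqrt{-1}\sqrt{-\sigma}$, using the principal branch. Substituting $c=\nu\sqrt{(1-2\sigma)/2}$ and writing the expectation as an integral against $\phi(\xi)$ then yields (\ref{main}). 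In the case $\sigma<0$ the integrand $G_d(\cdot;z;0)$ means the entire extension given by Theorem \ref{thm:G_goe}; this is consistent because the explicit formula consists of polynomials, Gaussians and $\erfc$, all of which are entire.

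The main obstacle is the $\sigma<0$ case. There I need (i) that the entire extension of $G_d(\cdot;z;0)$ supplied by Lemma \ref{lem:negative_var} coincides with the analytic continuation of the closed form in Theorem \ref{thm:G_goe}, and (ii) that the integral against $\phi(\xi)$ converges absolutely along the imaginary line. For (i) I would invoke uniqueness of analytic continuation from $\R$. For (ii) I would note that $\erfc$ and $e^{-c^2/2}$ grow at most like $e^{+\beta\xi^2/2}$ along $c+\sqrt{-1}\sqrt{\beta}\xi$. Since $\beta=-\sigma<1/d\le1$, this growth is dominated by the factor $e^{-\xi^2/2}$ from $\phi(\xi)$, which is exactly the absolute integrability used in the proof of (\ref{equality1}).
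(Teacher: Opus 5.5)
Your proposal is correct and follows essentially the paper's route: reduce via (\ref{FG}), write $g_{m,d}(\cdot;\sigma)$ as a Weierstrass transform of $g_{m,d}(\cdot;-1/d)$ using (\ref{semi_group}) and the polynomial bound, then apply (\ref{equality}) with $\alpha=1/d$, $\beta=\sigma$ for $\sigma>0$ and (\ref{equality1}) with $\alpha=\sigma+1/d$, $\beta=-\sigma$ for $\sigma<0$. One small correction to your point (ii): for $d\ge 2$ the GOE function can grow like $e^{d\beta\xi^2/2}$ along $c+\sqrt{-1}\sqrt{\beta}\xi$, not $e^{\beta\xi^2/2}$ (terms such as $\erfc(x)$, $e^{-x^2}$ and $e^{-3x^2/2}$ already appear for $d=2,3$), so domination by $\phi(\xi)$ needs exactly $\beta<1/d$, i.e.\ $\sigma>-1/d$, rather than $\beta<1$; this integrability is in any case already supplied by the absolute-integrability step (using $\alpha>0$) in the proof of Lemma \ref{lem:negative_var}, so no separate estimate is needed.
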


\begin{remark}
In (\ref{main}), when \(\sigma<0\),
we take the principal branch $\sqrt{\sigma}=\sqrt{-1}\sqrt{-\sigma}$.
The opposite branch gives the same value, since \(\xi\stackrel{d}{=}-\xi\).
Although the integrand in (\ref{main}) is then complex-valued, the integral (\ref{main}) is real for real $\nu$ and $z$.
\end{remark}

\begin{remark}
Theorem \ref{thm:main} does not cover the boundary case $\sigma=-1/d$,
although this is in the feasible parameter space (\ref{sigma}).
Since $g_{m,d}(x;\sigma)$ in (\ref{g}) is continuous in $\sigma$ at $-1/d$,
we have
\[
 G_d(c;z;-1/d) = \lim_{\sigma\downarrow -1/d}
 \int_{-\infty}^\infty G_d\bigl(c+\sqrt{\sigma}\xi;z;0\bigr) \phi(\xi) \,\dd\xi.
\]

For example, when $d=1,2$, direct calculation using $g_{m,d}$ in (\ref{g}) yields
\begin{equation}
\label{G12boundary}
\begin{aligned}
 G_1(x;z;-1) =&
 -x\1(x<0) + z x\1(x>0), \\
 G_2(x;z;-1/2) =& 
  (e^{-x^2}+x^2-1)\1(x<0)
  + z e^{-x^2}
  + z^2 (e^{-x^2}+x^2-1)\1(x>0),
\end{aligned}
\end{equation}
 which are shown to equal the limits $\lim_{\sigma\downarrow -1/d}G_d(x;z;\sigma)$, $d=1,2$, respectively (see (\ref{F12boundary}) in Appendix \ref{subsec:d=1,2}).
\end{remark}

\begin{remark}
Recall that $G_d(c;z;0)$ with $z=-1$ was 
\[
 G_d(c;-1;0)  = (-1)^d 2^{-d/2} H_d(\sqrt{2}c)
\]
(see (\ref{z=-1})).
Then, we have the known formula
\begin{equation}
\label{known_formula}
\begin{aligned}
  F_d(\nu;-1)
=& (-1)^d \Bigl(\frac{\gamma}{2\pi}\Bigr)^{d/2} (1-2\sigma)^{-d/2}
 \phi(\nu) \int_{-\infty}^\infty H_d\Bigl(\nu\sqrt{1-2\sigma}+\sqrt{2\sigma}\xi\Bigr) \phi(\xi) \,\dd\xi \\
=& (-1)^d \Bigl(\frac{\gamma}{2\pi}\Bigr)^{d/2}\phi(\nu) H_{d}(\nu),
\end{aligned}
\end{equation}
which is independent of $\sigma$
(e.g., \citet{Adler:1981}, \citet{Tomita:1986}).
The last equality in (\ref{known_formula}) is proved by a generating function approach:
\begin{align*}
\sum_{d=0}^\infty & \frac{1}{d!}w^d (1-2\sigma)^{-d/2} \int_{-\infty}^\infty H_d\Bigl(\nu\sqrt{1-2\sigma}+\sqrt{2\sigma}\xi\Bigr) \phi(\xi) \,\dd\xi \nonumber \\
&= \int_{-\infty}^\infty \exp\biggl(\frac{w}{\sqrt{1-2\sigma}} \Bigl(\nu\sqrt{1-2\sigma}+\sqrt{2\sigma}\xi\Bigr)-\frac{1}{2}\frac{w^2}{1-2\sigma}\biggr) \phi(\xi)\,\dd\xi \\
&= \exp\biggl(\nu w + \frac{1}{2}\frac{2\sigma w^2}{1-2\sigma} -\frac{1}{2}\frac{w^2}{1-2\sigma}\biggr)
= \exp\biggl(\nu w -\frac{1}{2}w^2\biggr) \\
&= \sum_{d=0}^\infty \frac{1}{d!}w^d H_d(\nu).
\end{align*}
\end{remark}

\subsection{$G_d(x;z;0)$ for $d=1,2,3$}

The functions $G_d(x;z;0)$ for $d=1,2,3$ are displayed below.

$d=1$:
\[
 G_1(x;z;0) =
 \Biggl(\frac{e^{-\frac{x^2}{2}}}{\sqrt{2\pi}} -\frac{x\,\erfc\bigl(\frac{x}{\sqrt{2}}\bigr)}{2}\Biggr)
+ \Biggl(\frac{e^{-\frac{x^2}{2}}}{\sqrt{2\pi}} +\frac{x\,\erfc\bigl(-\frac{x}{\sqrt{2}}\bigr)}{2}\Biggr)z
\]

$d=2$:
\begin{align*}
 G_2(x;z;0)
=&
\Biggl(
-\frac{e^{-x^2} x}{2\sqrt{\pi}}
+\frac{e^{-\frac{x^2}{2}} \erfc\bigl(\frac{x}{\sqrt{2}}\bigr)}{2\sqrt{2}}
+\frac{(2 x^2-1) \erfc(x)}{4}
\Biggr)
 + \frac{e^{-\frac{x^2}{2}}}{\sqrt{2}} z \\
&+\Biggl(
\frac{e^{-x^2} x}{2\sqrt{\pi}}
+\frac{e^{-\frac{x^2}{2}} \erfc\bigl(-\frac{x}{\sqrt{2}}\bigr)}{2\sqrt{2}}
+\frac{(2 x^2-1) \erfc(-x)}{4}
\Biggr)z^2
\end{align*}

$d=3$:
\begin{align*}
G_3(x;z;0)
=&
\Biggl(
-\frac{3 e^{-\frac{3 x^2}{2}} x}{2\sqrt{2}\pi}
+\frac{(x^2-2) e^{-x^2} \erfc\bigl(\frac{x}{\sqrt{2}}\bigr)}{4\sqrt{\pi}} \\
&\quad
+\frac{3 (2 x^2+1) e^{-\frac{x^2}{2}} \erfc(x)}{4\sqrt{2\pi}}
-\frac{(2 x^3-3 x) \erfc(x) \erfc\bigl(\frac{x}{\sqrt{2}}\bigr)}{8}
\Biggr) \\
&
+ \Biggl(
-\frac{3 e^{-\frac{3 x^2}{2}} x}{2\sqrt{2}\pi}
-\frac{(x^2-2) e^{-x^2} \erfc\bigl(-\frac{x}{\sqrt{2}}\bigr)}{4\sqrt{\pi}} \\
&\quad
+\frac{3 (2 x^2+1)e^{-\frac{x^2}{2}} \erfc(x)}{4\sqrt{2\pi}}
+\frac{(2 x^3-3 x) \erfc(x) \erfc\bigl(-\frac{x}{\sqrt{2}}\bigr)}{8}
\Biggr)z \\
&
+\Biggl(
\frac{3 e^{-\frac{3 x^2}{2}} x}{2\sqrt{2}\pi}
-\frac{(x^2-2) e^{-x^2} \erfc\bigl(\frac{x}{\sqrt{2}}\bigr)}{4\sqrt{\pi}} \\
&\quad
+\frac{3(2 x^2+1) e^{-\frac{x^2}{2}} \erfc(-x)}{4\sqrt{2\pi}}
-\frac{(2 x^3-3 x) \erfc(-x) \erfc\bigl(\frac{x}{\sqrt{2}}\bigr)}{8}
\Biggr)z^2 \\
&
+\Biggl(
\frac{3 e^{-\frac{3 x^2}{2}} x}{2\sqrt{2}\pi}
+\frac{(x^2-2) e^{-x^2} \erfc\bigl(-\frac{x}{\sqrt{2}}\bigr)}{4\sqrt{\pi}} \\
&\quad
+\frac{3 (2x^2+1) e^{-\frac{x^2}{2}} \erfc(-x)}{4\sqrt{2\pi}}
+\frac{(2 x^3-3 x) \erfc(-x) \erfc\bigl(-\frac{x}{\sqrt{2}}\bigr)}{8}
\Biggr)z^3
\end{align*}

Except for the case $\sigma=0$,
we need to transform $G_d(\cdot;z;0)$ to $G_d(\cdot;z;\sigma)$ by the Weierstrass transform in Theorem \ref{thm:main}.
For $d\le 2$, this transform can be carried out analytically,
and $F_d(\nu;z)$ is expressed without an integral representation.
For the explicit forms of $F_1(\nu;z)$ and $F_2(\nu;z)$, see Appendix \ref{subsec:d=1,2}.

Figure \ref{fig:number_density} depicts the expected number densities $f_{m,d}(\nu)$ when $d=3$ and $m=0,1,2,3$.
Noting the parameter space (\ref{sigma}) and (\ref{gamma}),
we set the parameters $\gamma=1$ and $\sigma=-0.2,-0.1,\ldots,+0.4$.
When $\sigma<0$, we require numerical packages for the error functions with complex arguments, which are available in standard program packages (e.g., \texttt{Mathematica} or \texttt{Python/mpmath}).

\bigskip
\begin{figure}[h]
\begin{center}
\begin{tabular}{cc}
\scalebox{0.7}{\includegraphics{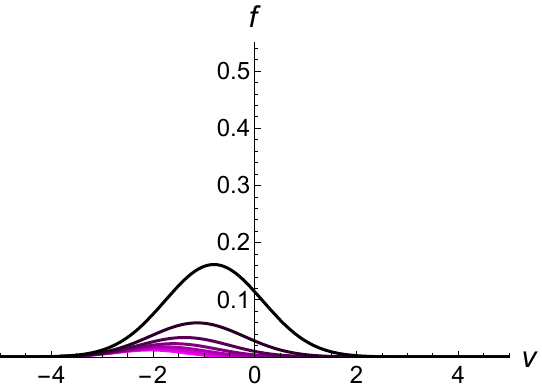}} &
\scalebox{0.7}{\includegraphics{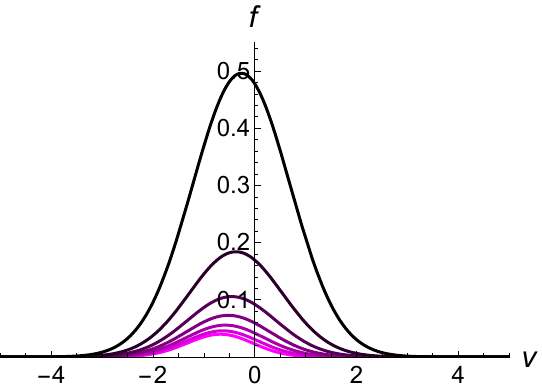}} \\
$\mathrm{index}=0$ & $\mathrm{index}=1$ \\[2mm] 
\scalebox{0.7}{\includegraphics{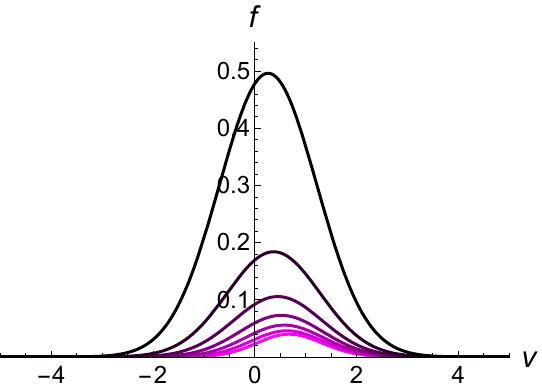}} &
\scalebox{0.7}{\includegraphics{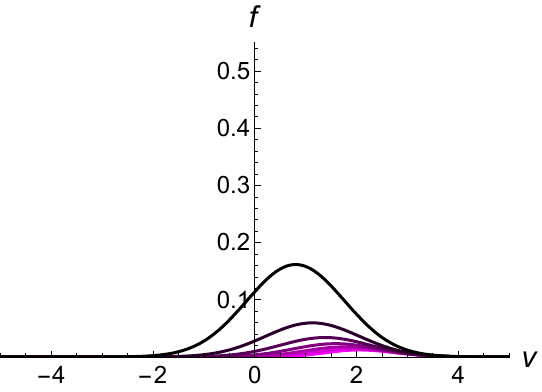}} \\
$\mathrm{index}=2$ & $\mathrm{index}=3$
\end{tabular}
\caption{$f_{m,d}$ for $d=3$, $m\,(\mathrm{index})=0,1,2,3$, $\gamma=1$.\\ \small
(Magenta line: $\sigma=-0.2$, Black line: $\sigma=0.4$, Middle 5 lines: $\sigma=-0.1,0,0.1,0.2,0.3$.)}
\label{fig:number_density}
\end{center}
\end{figure}

\subsection{Dunnett integral}
\label{subsec:dunnett}

In multiple comparisons, the distribution of the maximum of several test statistics is needed to assess the significance.
The typical and simplest case is as follows:

Let $X_1,\ldots,X_d$ be Gaussian random variables with common variance,
but they are correlated.
We assume the simplest model
\[
 (X_1,\ldots,X_d)\sim
\mathcal{N}_d\bigl(0,I_d + \rho \1_d\1_d^\top\bigr),
\quad -1/d\le \rho <\infty,
\]
where $\mathbf{1}_d=(1,\ldots,1)^\top$ is a $d\times 1$ constant vector, and consider the probability $F_\rho(c)=\Pr\bigl(\max_{i=1,\ldots,d} X_i \le c\bigr)$.
Let $\xi_0,\xi_1,\xi_2,\ldots$ be i.i.d.\ standard Gaussian random variables.
When $\rho>0$, the random variables $(X_1,\ldots,X_d)$ are 
represented as
\[
 X_i = \xi_i-\sqrt{\rho}\,\xi_0, \quad i=1,\ldots,d.
\]
Based on this representation,  we have
\begin{align*}
 F_{\rho}(c)
=& \E[\Pr(\forall i, \xi_i\le c + \sqrt{\rho}\,\xi_0 \,|\, \xi_0)] \\
=& \E\bigl[\Phi(c+\sqrt{\rho}\,\xi_0)^d\bigr] \qquad (\rho\ge 0),
\end{align*}
where $\Phi$ is the cumulative distribution function of $\mathcal{N}(0,1)$
(\cite{Dunnett:1989,Dunnett:1993}).
This method is useful for numerical calculation.

However, this method works only when $\rho\ge 0$. 
In the following, we prove that
\begin{align*}
 F_{\rho}(c) = \E\bigl[\Phi(c+\sqrt{-1}\sqrt{|\rho|}\,\xi_0)^d\bigr] \quad \mbox{for }-1/d<\rho<0.
\end{align*}
Note first that
\[
 F_{-1/d}(c) = \Pr\bigl({\max}_{i=1,\ldots,d}\,(\xi_i-\bar\xi) \le c\bigr), \quad \bar\xi=\sum_{i=1}^d\xi_i/d,
\]
is well-defined.
Let $f(c)=F_{-1/d}(c)$ and $h(c;\alpha)=\E[f(c+\sqrt{\alpha}\xi)]$.
Then, $F_\rho(c)=h(c;1/d-|\rho|)$ and $F_0(c)=h(c;1/d)=\Phi(c)^d$ hold.
Since $|f(c)|$ is bounded, by applying Lemma \ref{lem:negative_var}, we have
\begin{align*}
 F_{\rho}(c)
 =& h(c;1/d-|\rho|) \\
 =& \E\bigl[h(c+\sqrt{-1}\sqrt{|\rho|}\xi;1/d)\bigr] \quad (\mbox{by }(\ref{equality1})) \\
 =& \E\bigl[\Phi(c+\sqrt{-1}\sqrt{|\rho|}\xi)^d\bigr].
\end{align*}

\appendix

\section{}
\label{sec:appendix}

\subsection{Incomplete de Bruijn's integral}
\label{subsec:oncomplete_de_Bruijn}

Here we provide an incomplet de Bruijn's integral formula, which implies Lemma \ref{lem:de_Bruijn} as a special case.
We follow the notation of \citet{deBruijn:1955}.

\begin{theorem}[Incomplete de Bruijn's integral]
\label{thm:incomplete}
Let $\phi_1,\ldots,\phi_{d+1}$ be integrable functions. 
Let
\begin{equation}
\label{I}
    I(x_{d+1}) = \int_{-\infty<x_1<\cdots<x_{d}<\infty}
 \det\bigl(\phi_i(x_j)\bigr)_{1\le i,j\le d+1}\,\dd x_1\cdots\dd x_{d}.
\end{equation}
Then,
\[
 I(x_{d+1}) = (-1)^d\times \begin{cases}
 \pf\begin{pmatrix}
  A & p(x_{d+1}) \\ -p(x_{d+1})^\top & 0
\end{pmatrix}
 & (d:\,\mathrm{even}), \\
  \pf\begin{pmatrix}
  A & b & p(x_{d+1}) \\ -b^\top & 0 & 1 \\ -p(x_{d+1})^\top & -1 & 0
 \end{pmatrix}
 -\pf(A)
 & (d:\,\mathrm{odd}),
 \end{cases}
\]
where $A=(a_{ij})_{1\le i,j\le d+1}$, $b=(b_1,\ldots,b_{d+1})^\top$, $p(x_{d+1})=(\phi_1(x_{d+1}),\ldots,\phi_{d+1}(x_{d+1}))^\top$ with
\begin{equation}
\label{Ab}
\begin{aligned}
& a_{ij} = \int_{-\infty<x<y<\infty}\det\begin{pmatrix}
 \phi_i(x) & \phi_i(y) \\
 \phi_j(x) & \phi_j(y) \end{pmatrix}\,\dd x\,\dd y, \\
& b_i = \int_{-\infty}^\infty \phi_i(x)\,\dd x. 
\end{aligned}
\end{equation}
\end{theorem}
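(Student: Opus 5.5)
The plan is to reduce Theorem \ref{thm:incomplete} to de Bruijn's original theorem for the complete integral in $d+1$ (or $d+2$) variables, using the exterior-algebra description of the pfaffian from Section \ref{subsec:pffafian}. Write $x_{d+1}=t$. Expand $\det(\phi_i(x_j))_{1\le i,j\le d+1}$ along the last column $j=d+1$:
\[
 \det\bigl(\phi_i(x_j)\bigr) = \sum_{k=1}^{d+1} (-1)^{k+d+1}\phi_k(t)\det\bigl(\phi_i(x_j)\bigr)_{i\ne k,\,1\le j\le d}.
\]
Integrating over $x_1<\cdots<x_d$, each cofactor becomes a complete de Bruijn integral of $d$ functions $\{\phi_i\}_{i\ne k}$. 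By de Bruijn's theorem, this equals $\pf(A^{(k)})$ when $d$ is even, where $A^{(k)}$ is $A$ with row and column $k$ deleted. When $d$ is odd, it equals $\pf$ of the $(d+1)\times(d+1)$ matrix obtained by bordering $A^{(k)}$ with $b^{(k)}$.

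In the even case, I would use the pfaffian expansion along the last row and column:
\[
 \pf\begin{pmatrix} A & p\\ -p^\top & 0\end{pmatrix}=\sum_k (-1)^{k+d+1} p_k \pf(A^{(k)}) .
\]
This identity follows from the formula $\omega^{n/2}/(n/2)!$: the term $p_k e_k\wedge e_{d+2}$ appears exactly once. Comparing it with the cofactor expansion gives the result up to the global sign $(-1)^d$, which I would fix by tracking the position of the column $t$ relative to the ordered variables (it is $+1$ for even $d$ anyway).

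The odd case is the main bookkeeping obstacle. Here each cofactor is $\pf\begin{pmatrix}A^{(k)}&b^{(k)}\\-b^{(k)\top}&0\end{pmatrix}$. I would check that the $(d+3)\times(d+3)$ pfaffian in the statement, expanded along its last row, gives
\[
 \sum_k \pm p_k\,\pf\begin{pmatrix}A^{(k)}&b^{(k)}\\ \cdot&0\end{pmatrix} \;\pm\; 1\cdot \pf(A).
\]
The last term comes from pairing the final index with the index carrying the entry $1$. It is exactly cancelled by the subtracted $\pf(A)$. Then I need to verify that the signs $(-1)^{k+d}$ match those of the Laplace expansion; I expect this to be a careful but routine parity count. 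Integrability of the $\phi_i$ guarantees that all entries of $A$ and $b$ are finite, and justifies applying de Bruijn's theorem and Fubini termwise.
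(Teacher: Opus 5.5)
Your argument is correct, but it is organized differently from the paper's proof.

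\textbf{Your route.} You expand $\det(\phi_i(x_j))$ along the fixed column $x_{d+1}$. You then apply de Bruijn's original theorem as a black box to each of the $d+1$ cofactors. Finally, you reassemble the result using the last-row expansion of the bordered pfaffian.

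\textbf{The paper's route.} The paper never invokes de Bruijn's scalar theorem. Instead, it reruns de Bruijn's pairing-and-symmetrization argument for the $\bigwedge^d$-valued integral $\int_{x_1<\cdots<x_d}\phi(x_1)\wedge\cdots\wedge\phi(x_d)$. This yields $\Omega^{d/2}/(d/2)!$ for even $d$, and $\Omega^{(d-1)/2}\wedge\Omega_1/((d-1)/2)!$ for odd $d$; that single vector identity encodes all $d+1$ of your cofactor identities at once. The paper then wedges with $\phi(x_{d+1})\wedge e_{d+2}$ and uses $\Omega^{d/2}\wedge\phi(x_{d+1})\wedge e_{d+2}/(d/2)! = (\Omega+\phi(x_{d+1})\wedge e_{d+2})^{(d+2)/2}/((d+2)/2)!$. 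In the odd case it uses the analogue with $\Omega_1\wedge e_{d+2}$, $\phi(x_{d+1})\wedge e_{d+3}$ and $e_{d+2}\wedge e_{d+3}$ added to $\Omega$. These identities are exactly the exterior-algebra form of your last-row pfaffian expansion. The subtracted $\pf(A)$ comes from the $\Omega^{(d+1)/2}\wedge e_{d+2}\wedge e_{d+3}$ term, which corresponds to your term pairing the last index with the entry $1$.

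\textbf{Comparison.} Your route is shorter, given de Bruijn's theorem, and reduces all signs to two standard expansion formulas. The paper's route is self-contained and avoids index bookkeeping entirely.

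\textbf{The deferred parity count.} The check you left for later does close, and no tracking of where $x_{d+1}$ sits among the ordered variables is needed, since it is the last column by definition. The Laplace sign is $(-1)^{k+d+1}$, while the last-column pfaffian sign is $(-1)^{k+1}$, so they differ by exactly the global factor $(-1)^d$. In the odd case, the $(d+2,d+3)$ entry enters the expansion with sign $(-1)^{d+3}=+1$ and contributes exactly $+\pf(A)$. Hence $I(x_{d+1}) = -\bigl(\pf(\cdot)-\pf(A)\bigr)$, as the statement asserts.
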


The original de Bruijn's integral is over $x_1<\cdots<x_d<x_{d+1}$.
We call (\ref{I}) the incomplete de Bruijn integral, since the variable $x_{d+1}$ is not used as an integration variable.

In the rest of this subsection, we prove Theorem \ref{thm:incomplete}.

\begin{proof}
Let $V$ be a vector space with basis $e_1,e_2,\ldots$,
and define a $V$-valued function
\[
  \phi(x) = \sum_{i=1}^{d+1} \phi_i(x) e_i.
\]
Then, the determinant is expressed as
\[
 \phi(x_1) \wedge\cdots\wedge \phi(x_{d+1}) =
 \det\bigl(\phi_i(x_j)\bigr)_{1\le i,j\le d+1}\,e_1\wedge\cdots\wedge e_{d+1}.
\]
Let
\[
 \Omega = \int_{-\infty<x<y<\infty}\phi(x)\wedge\phi(y)\,\dd x\,\dd y, \qquad
 \Omega_1 = \int_{-\infty}^{\infty}\phi(x)\,\dd x.
\]
The evaluation of $I(x_{d+1})$ depends on the parity of $d$.

Case 1: $d$ is even.
The integral $I(x_{d+1})$ is expressed as the coefficient of the ($d+2$)-form
\begin{equation}
\label{F_wedge}
 I(x_{d+1}) \, e_1\wedge\cdots\wedge e_{d+2} = \left(\int_{x_1<\cdots<x_{d}} \phi(x_1)\wedge\cdots\wedge\phi(x_{d})\,\dd x_1\cdots\,\dd x_{d}\right) \,\wedge\phi(x_{d+1})\wedge e_{d+2}.
\end{equation}
We rewrite the integral in (\ref{F_wedge}) as
\begin{equation}
\label{F_wedge0}
 \pm \int_{x_2<x_4<\cdots<x_{d}}
 \phi(x_2)\wedge\phi(x_4)\wedge\cdots\wedge\phi(x_{d})\wedge
 \Phi(x_2,x_4,\ldots,x_{d})\,\dd x_2\dd x_4\cdots\dd x_{d},
\end{equation}
where
\begin{equation}
\label{F_wedge1}
\begin{aligned}
& \Phi(x_2,x_4,\ldots,x_{d}) = \\
& \left( \int_{-\infty}^{x_2} \phi(x_1)\,\dd x_1 \right)
 \wedge\left( \int_{x_{2}}^{x_{4}} \phi(x_{3})\,\dd x_{3} \right)
 \wedge\left( \int_{x_{4}}^{x_{6}} \phi(x_{5})\,\dd x_{5} \right)
 \wedge\cdots
 \wedge\left( \int_{x_{d-2}}^{x_d} \phi(x_{d-1})\,\dd x_{d-1} \right),
\end{aligned}
\end{equation}

By adding the value
\begin{align*}
 &\left( \int_{-\infty}^{x_2}\phi(x_1)\,\dd x_1 \right)
 \wedge\left( \int_{-\infty}^{x_{2}}\phi(x_{3})\,\dd x_{3} \right)
 \wedge\left( \int_{x_{4}}^{x_{6}}\phi(x_{5})\,\dd x_{5} \right)
 \wedge\cdots
 \wedge\left( \int_{x_{d-2}}^{x_{d}}\phi(x_{d-1})\,\dd x_{d-1} \right) \\
 &= 0
\end{align*}
to (\ref{F_wedge1}), we obtain
\[
\begin{aligned}
& \Phi(x_2,x_4,\ldots,x_{d}) \\
& =\left( \int_{-\infty}^{x_2}\phi(x_1)\,\dd x_1 \right)
 \wedge\left( \int_{-\infty}^{x_4}\phi(x_{3})\,\dd x_{3} \right)
 \wedge\left( \int_{x_{4}}^{x_6}\phi(x_{5})\,\dd x_{5} \right)
 \wedge\cdots
 \wedge\left( \int_{x_{d-2}}^{x_{d}}\phi(x_{d-1})\,\dd x_{d-1} \right).
\end{aligned}
\]
By iterating this procedure, we finally obtain
\[
 \Phi(x_2,x_4,\ldots,x_{d})
=\left( \int_{-\infty}^{x_2}\phi(x_1)\,\dd x_1 \right)
 \wedge\left( \int_{-\infty}^{x_4}\phi(x_3)\,\dd x_3 \right)
 \wedge\cdots
 \wedge\left( \int_{-\infty}^{x_d}\phi(x_{d-1})\,\dd x_{d-1} \right),
\]
which is skew-symmetric in $(x_2,x_4,\ldots,x_{d})$.
Therefore, the integrand in (\ref{F_wedge0}) is symmetric in $(x_2,x_4,\ldots,x_{d})$, and hence we have
\begin{align*}
  (\mbox{\ref{F_wedge0}})
 =& \frac{\pm 1}{(d/2)!} \int_{-\infty}^\infty\cdots\int_{-\infty}^\infty
 \phi(x_2)\wedge\phi(x_4)\wedge\cdots\wedge\phi(x_{d})\wedge
 \Phi(x_2,x_4,\ldots,x_{d})\,\dd x_2\dd x_4\cdots\dd x_{d} \\
 =& \frac{1}{(d/2)!}
\left( \int_{x_1<x_2} \phi(x_1)\wedge\phi(x_2)\,\dd x_1\,\dd x_2\right) \wedge\cdots\wedge
\left(\int_{x_{d-1}<x_d} \phi(x_{d-1})\wedge\phi(x_d)\,\dd x_{d-1}\,\dd x_d\right) \\
 =& \frac{1}{(d/2)!} \Omega^{d/2}, \qquad
\Omega^{d/2}=\underbrace{\Omega\wedge\cdots\wedge\Omega}_{d/2\ \mathrm{times}}.
\end{align*}
Therefore,
\begin{align*}
 (\mbox{\ref{F_wedge}})
 = \frac{1}{(d/2)!} \Omega^{d/2} \wedge \phi(x_{d+1}) \wedge e_{d+2}
 =& \frac{1}{((d+2)/2)!} (\Omega + \phi(x_{d+1}) \wedge e_{d+2})^{(d+2)/2} \\
 =&
 \pf\begin{pmatrix}
  A & p(x_{d+1}) \\ -p(x_{d+1})^\top & 0
\end{pmatrix}
 e_1\wedge\cdots\wedge e_{d+2},
\end{align*}
where $A$ and $p(x_{d+1})$ are defined in (\ref{Ab}).

Case 2: $d$ is odd.
The integral $I(x_{d+1})$ is expressed as the coefficient of the ($d+3$)-form
\begin{equation}
\label{F_wedge3}
 I(x_{d+1}) \, e_1\wedge\cdots\wedge e_{d+3} = \left(\int_{x_1<\cdots<x_{d}} \phi(x_1)\wedge\cdots\wedge\phi(x_{d})\,\dd x_1\cdots \dd x_{d}\right) \,\wedge\phi(x_{d+1}) \wedge e_{d+2} \wedge e_{d+3}.
\end{equation}
We rewrite the integral in (\ref{F_wedge3}) as
\begin{equation}
\label{F_wedge4}
 \pm \int_{x_2<x_4<\cdots<x_{d-1}}
 \phi(x_2)\wedge\phi(x_4)\wedge\cdots\wedge\phi(x_{d-1})\wedge
 \Phi(x_2,x_4,\ldots,x_{d-1})\,\dd x_2\dd x_4\cdots\dd x_{d-1},
\end{equation}
where
\begin{equation*}
 \Phi(x_2,x_4,\ldots,x_{d-1})
=\left( \int_{-\infty}^{x_2} \phi(x_1)\,\dd x_1 \right)
 \wedge\left( \int_{x_{2}}^{x_{4}} \phi(x_{3})\,\dd x_{3} \right)
 \wedge\cdots
 \wedge\left( \int_{x_{d-1}}^{\infty} \phi(x_d)\,\dd x_d \right).
\end{equation*}
As in the case where $d$ is even, this integral is modified as
\[
 \Phi(x_2,x_4,\ldots,x_{d-1})
=\left( \int_{-\infty}^{x_2}\phi(x_1)\,\dd x_1 \right)
 \wedge\cdots
 \wedge\left( \int_{-\infty}^{x_{d-1}}\phi(x_{d-2})\,\dd x_{d-2} \right)
 \wedge\left( \int_{-\infty}^{\infty}\phi(x_d)\,\dd x_d \right),
\]
which is skew-symmetric in $(x_2,x_4,\ldots,x_{d-1})$.
Since the integrand in (\ref{F_wedge4}) is symmetric in $(x_2,x_4,\ldots,x_{d-1})$,
\begin{align*}
  (\mbox{\ref{F_wedge4}})
 =& \frac{\pm 1}{((d-1)/2)!} \\
  & \times\int_{-\infty}^\infty\cdots\int_{-\infty}^\infty
 \phi(x_2)\wedge\phi(x_4)\wedge\cdots\wedge\phi(x_{d-1})\wedge
 \Phi(x_2,x_4,\ldots,x_{d-1}) \,\dd x_2\dd x_4\cdots\dd x_{d-1} \\
 =& \frac{1}{((d-1)/2)!} \Omega^{(d-1)/2}\wedge\Omega_1.
\end{align*}
Therefore,
\begin{equation}
\label{F_wedge6}
 (\mbox{\ref{F_wedge3}})
 = \frac{-1}{((d-1)/2)!} \Omega^{(d-1)/2}\wedge \Omega_1\wedge e_{d+2} \wedge \phi(x_{d+1})\wedge e_{d+3}.
\end{equation}
Moreover, noting the expansion
\begin{align*}
& (\Omega + \Omega_1\wedge e_{d+2} + \phi(x_{d+1})\wedge e_{d+3} + e_{d+2}\wedge e_{d+3})^{(d+3)/2} \\
&= \frac{d+3}{2} \Omega^{(d+1)/2}\wedge (\Omega_1\wedge e_{d+2} + \phi(x_{d+1})\wedge e_{d+3} + e_{d+2}\wedge e_{d+3}) \\
&\quad + \frac{(d+3)(d+1)}{8} \Omega^{(d-1)/2}\wedge (\Omega_1\wedge e_{d+2} + \phi(x_{d+1})\wedge e_{d+3} + e_{d+2}\wedge e_{d+3})^2 \\
&= \frac{d+3}{2} \Omega^{(d+1)/2}\wedge e_{d+2}\wedge e_{d+3} \\
&\quad + \frac{(d+3)(d+1)}{4} \Omega^{(d-1)/2}\wedge \Omega_1\wedge e_{d+2}\wedge \phi(x_{d+1})\wedge e_{d+3},
\end{align*}
we have
\begin{equation*}
\begin{aligned}
  (\mbox{\ref{F_wedge6}})
=& 
 \frac{-1}{((d+3)/2)!} (\Omega + \Omega_1\wedge e_{d+2} + \phi(x_{d+1})\wedge e_{d+3} + e_{d+2}\wedge e_{d+3})^{(d+3)/2} \\
 &+ \frac{1}{((d+1)/2)!} \Omega^{(d+1)/2}\wedge e_{d+2}\wedge e_{d+3} \\
=&
-\left( \pf\begin{pmatrix}
  A & b & p(x_{d+1}) \\ -b^\top & 0 & 1 \\ -p(x_{d+1})^\top & -1 & 0
\end{pmatrix}
 -\pf(A) \right)
 e_1\wedge\cdots\wedge e_{d+3},
\end{aligned}
\end{equation*}
where $A$, $b$, and $p(x_{d+1})$ are defined in (\ref{Ab}).
This completes the proof.
\end{proof}

\subsection{$\bm{G_d(x;\pm 1;0)}$ when $d$ is even}
\label{subsec:pm1}

We assume $d$ is even.
Let $p_n(x)$ be a skew-orthogonal polynomial of degree $n$ such that
\begin{equation}
\begin{aligned}
 \int_{x<y} (p_i(x)p_j(y)-p_j(x)p_i(y)) e^{-\frac{1}{2}x^2-\frac{1}{2}y^2} \dd x\dd y
 =&  \int_{\R^2} \sgn(y-x) p_i(x)p_j(y) e^{-\frac{1}{2}x^2-\frac{1}{2}y^2} \dd x\dd y \\
 =&  \begin{cases}
 \sigma_k  & (i,j)=(2k,2k+1), \\
 -\sigma_k & (i,j)=(2k+1,2k), \\
 0         & (\mbox{otherwise}),
   \end{cases}
\label{skew_orthogonality}
\end{aligned}
\end{equation}
where $k=0,\ldots,d/2-1$ is an integer.
It is well known that
\begin{equation}
\label{pn}
 p_n(x) = \begin{cases}
\displaystyle
  2^{-n/2} H_n(\sqrt{2}x) & (n \mbox{ is even}), \\
\displaystyle
  2^{-n/2} H_n(\sqrt{2}x) - \frac{n-1}{2} 2^{-(n-2)/2} H_{n-2}(\sqrt{2}x) & (n \mbox{ is odd}),
          \end{cases}
\qquad \sigma_k = 2\sqrt{\pi}\frac{(2k)!}{2^{2k}}, 
\end{equation}
form such a system \cite{Nagao-Wadati:1991,Kuriki:arXiv}.

Using this monic system $p_n(x)$, we apply Theorem \ref{thm:G_goe}.
When $z=-1$, $v_{ij}$ in (\ref{vij}) becomes 
the left-hand side of (\ref{skew_orthogonality}).
Hence, we have
\begin{equation}
\label{V}
 V = (v_{ij}) = \diag\left(
 \begin{pmatrix} 0 & \sigma_0 \\ -\sigma_0 & 0 \end{pmatrix},\ldots,
 \begin{pmatrix} 0 & \sigma_{d/2-1} \\ -\sigma_{d/2-1} & 0 \end{pmatrix},0 \right)
\end{equation}
and
\begin{equation}
\label{Gd-1}
\begin{aligned}
 G_d(x;-1;0)
 =& \frac{1}{c_d}\pf\begin{pmatrix}
  V & p(x) \\ -p(x)^\top & 0 \end{pmatrix} \\
 =& \frac{1}{c_d} \pf\left(\begin{array}{ccccccc}
  0 & \multicolumn{1}{c|}{\sigma_0} & & & & & p_0(x) \\
  -\sigma_0 & \multicolumn{1}{c|}{0} & & & & & p_1(x) \\ \cline{1-2}
  & & \ddots & & & & \vdots \\ \cline{4-5}
  & & & \multicolumn{1}{|c}{0} & \multicolumn{1}{c|}{\sigma_{d/2-1}} & & p_{d-2}(x) \\
  & & & \multicolumn{1}{|c}{-\sigma_{d/2-1}} & \multicolumn{1}{c|}{0} & & p_{d-1}(x) \\ \cline{4-7}
  & & & & & \multicolumn{1}{|c}{0} & p_d(x) \\
  -p_0(x) & -p_1(x) & \cdots & -p_{d-2}(x) & -p_{d-1}(x) & \multicolumn{1}{|c}{-p_d(x)} & 0
 \end{array}\right) \\
 =& \frac{1}{c_d} \left(\prod_{k=0}^{d/2-1}\sigma_k\right) p_d(x) =  p_d(x) =  2^{-d/2} H_d(\sqrt{2}x),
\end{aligned}
\end{equation}
which is equal to (\ref{z=-1}) when $d$ is even.

Let
\[
 q_n(y)=\int_y^\infty p_n(t) e^{-t^2/2} \dd t,
\qquad
 r_n=\int_{-\infty}^\infty p_n(t) e^{-t^2/2} \dd t =
 \begin{cases}
\displaystyle
 2^{-n}\sqrt{2\pi}\frac{n!}{(n/2)!} & (n\mbox{: even}), \\
 0 & (n\mbox{: odd}),
 \end{cases}
\]
and
\[
 q(y)=(q_0(y),\ldots,q_{d}(y))^\top, \quad
 r=(r_0,0,r_2,0,\ldots,r_{d})^\top.
\]
Then, $w^+_i(y)=q_{i-1}(y)$, $w^-_i(y)=r_{i-1}-q_{i-1}(y)$. 
Define
\[
 L_d(x,y) = \frac{1}{c_d}\pf\begin{pmatrix}
  V - 2 (r q(y)^\top-q(y) r^\top) & p(x) \\ -p(x)^\top & 0 \end{pmatrix},
\]
where $V$ was defined in (\ref{V}).
Then, Theorem \ref{thm:G_goe} states that
\[
 G_d(c;1;0)=L_d(x,y)|_{x=y=c}.
\]
For $x$ such that $p_d(x)\ne 0$, by means of (\ref{2rank}),
\begin{equation}
\label{Ld}
\begin{aligned}
L_d(x,y)
 &= \frac{1}{c_d}\pf\begin{pmatrix}
  V & p(x) \\ -p(x)^\top & 0 \end{pmatrix}\left[1 - 2(q(y)^\top,0)
\begin{pmatrix}
  V & p(x) \\ -p(x)^\top & 0 \end{pmatrix}^{-1} \begin{pmatrix}r \\ 0 \end{pmatrix}\right] \\
 &= 2^{-d/2}H_d(\sqrt{2}x) + K_d(x,y)
\end{aligned}
\end{equation}
where
\begin{equation}
\label{Kd}
\begin{aligned}
  K_d(x,y)
 =& L_d(x,y)-2^{-d/2}H_d(\sqrt{2}x) \\
 =& 2\,p_d(x) 
 (-q(y)^\top,0) \begin{pmatrix} V & p(x) \\ -p(x)^\top & 0 \end{pmatrix}^{-1} \begin{pmatrix}r \\ 0 \end{pmatrix} \quad (\mbox{when }p_d(x)\ne 0).
\end{aligned}
\end{equation}
(When $p_d(x)\ne 0$, the inverse matrix exists by (\ref{Gd-1})).

Now it is sufficient to show that
\begin{equation}
\label{KdKd}
 K_d(x,y) = \widetilde K_d(x,y),
\end{equation}
where
\begin{align*}
 \widetilde K_d(x,y) = 
 \sqrt{\frac{1}{2\pi}}\,\left(\frac{d-1}{2}\right)!\Biggl[ &
 2 \sum_{k=0}^{d}\frac{H_{k}(\sqrt{2}x)e^{-y^2/2}H_{k}(\sqrt{2}y)}{k!} \\
& +\frac{1}{2^{1/2}d!}\,H_d(\sqrt{2}x)\,
\int_{-\infty}^{\infty} e^{-t^2/2}\,H_{d+1}(\sqrt{2}t)\,\sgn(y-t)\,\dd t
\Biggr].
\end{align*}
If (\ref{KdKd}) holds for $x$ such that $p_d(x)\ne 0$, then it holds for all $x$ by continuity.
We prove (\ref{KdKd}) by checking
\[
 K_d(x,\infty) = \widetilde K_d(x,\infty) = 0
\]
and
\[
 \frac{\partial K_d(x,y)}{\partial y} = \frac{\partial \widetilde K_d(x,y)}{\partial y}.
\]

Using the explicit form of the inverse matrix in (\ref{Ld}) or (\ref{Kd}):
\begin{equation*}
 \begin{pmatrix} V & p(x) \\ -p(x)^\top & 0 \end{pmatrix}^{-1} =
 \frac{-1}{p_d(x)} \left(\begin{array}{ccccccc}
 0 & \multicolumn{1}{c|}{\frac{p_d(x)}{\sigma_0}} & & & & -\frac{p_1(x)}{\sigma_0} \\    
 -\frac{p_d(x)}{\sigma_0} & \multicolumn{1}{c|}{0} & & & & \frac{p_0(x)}{\sigma_0} \\ \cline{1-2}    
 & & \ddots & & & \vdots \\ \cline{4-5}
 & & & \multicolumn{1}{|c}{0} & \multicolumn{1}{c|}{\frac{p_d(x)}{\sigma_{d/2-1}}}  & -\frac{p_{d-1}(x)}{\sigma_{d/2-1}} \\
 & & & \multicolumn{1}{|c}{-\frac{p_d(x)}{\sigma_{d/2-1}}} & \multicolumn{1}{c|}{0} & \frac{p_{d-2}(x)}{\sigma_{d/2-1}} \\ \cline{4-7}
\frac{p_1(x)}{\sigma_0} & -\frac{p_0(x)}{\sigma_0} & \cdots & \frac{p_{d-1}(x)}{\sigma_{d/2-1}}  & -\frac{p_{d-2}(x)}{\sigma_{d/2-1}} & \multicolumn{1}{|c}{0} & 1 \\
 & & & & & \multicolumn{1}{|c}{-1} & 0
 \end{array}\right),
\end{equation*}
together with the definition of $p_n(x)$ in (\ref{pn}), the identity $r_{2k}/\sigma_k=1/(\sqrt{2}k!)$ and the recurence relation for the Hermite polynomial, we can verify that
\[
  \frac{\partial K_d(x,y)}{\partial y} = 2\,p_d(x) 
 e^{-y^2/2} (p(y)^\top,0) \begin{pmatrix} V & p(x) \\ -p(x)^\top & 0 \end{pmatrix}^{-1} \begin{pmatrix}r \\ 0 \end{pmatrix}
\]
and
\begin{align*}
 \frac{\partial \widetilde K_d(x,y)}{\partial y} = 
 \sqrt{\frac{1}{2\pi}}\,\left(\frac{d-1}{2}\right)!\Biggl[ &
 2 \sum_{k=0}^{d}\frac{H_{k}(\sqrt{2}x)e^{-y^2/2} \left[-2^{-1/2} H_{k+1}(\sqrt{2}y) + 2^{-1/2} k H_{k-1}(\sqrt{2}y)\right]}{k!} \\
& +\frac{2}{2^{1/2}d!}\,H_d(\sqrt{2}x)\, e^{-y^2/2}\,H_{d+1}(\sqrt{2}y)
\Biggr]
\end{align*}
have the same expansion $e^{-y^2/2}\sum_{i,j\ge 0} t_{ij} H_{i}(\sqrt{2}x) H_{j}(\sqrt{2}y)$ with the same coefficients $t_{ij}$.

Now we have proven that
\begin{equation}
\label{LdHKd}
 L_d(x,y) = 2^{-d/2}H_d(\sqrt{2}x) + \widetilde K_d(x,y)
\end{equation}
and hence
\[
G_d(x;1;0) = L_d(x,x) = 2^{-d/2}H_d(\sqrt{2}x) + \widetilde K_d(x,x),
\]
which is equal to (\ref{z=1}) when $d$ is even.

\subsection{$\bm{F_d(\nu;z)}$ for $\bm{d=1,2}$}
\label{subsec:d=1,2}

We show that when $d=1,2$, the integral in (\ref{main}) can be evaluated analytically and $F_d(\nu;z)$ is expressed without integration.

Let
\begin{align*}
 J^\pm_k(a,b) = \int_{-\infty}^\infty e^{-\frac{1}{2}\xi^2} e^{-a w^2} w^k \erfc\bigl(\pm\sqrt{b}\,w\bigr) \,\dd\xi,
\end{align*}
where
\begin{equation}
\label{w}
 w = \frac{1}{\sqrt{2}}\Bigl(\sqrt{2\sigma}\xi+\nu\sqrt{1-2\sigma}\Bigr).
\end{equation}
This includes
\[
 J^\pm_k(a,0) =  J_k(a,0) = \int_{-\infty}^\infty e^{-\frac{1}{2}\xi^2} e^{-a w^2} w^k \,\dd\xi
\]
as a special case.
Then,
\begin{equation}
\label{F1}
\begin{aligned}
F_1(\nu;z)
 =& \Bigl(\frac{\gamma}{2\pi}\Bigr)^{1/2} \phi(\nu)
 \Bigl(\frac{1-2\sigma}{2}\Bigr)^{-1/2} \int_{-\infty}^\infty \phi(\xi) G_1(w;z;0) \,\dd\xi \\
 =& \Bigl(\frac{\gamma}{2\pi}\Bigr)^{1/2} 
 \phi(\nu)\Bigl(\frac{1-2\sigma}{2}\Bigr)^{-1/2} \frac{1}{\sqrt{2\pi}} \\
&\times \biggl[
 \biggl(\frac{J_0(1/2,0)}{\sqrt{2\pi}}
-\frac{J^+_1(0,1/2)}{2}
\biggr)
 +\biggl(
 \frac{J_0(1/2,0)}{\sqrt{2\pi}}
 +\frac{J^-_1(0,1/2)}{2}
\biggr)z \biggr],
\end{aligned}
\end{equation}
\begin{equation}
\label{F2}
\begin{aligned}
F_2(\nu;z)
 =& \Bigl(\frac{\gamma}{2\pi}\Bigr)^{2/2} \phi(\nu)
 \Bigl(\frac{1-2\sigma}{2}\Bigr)^{-1} \int_{-\infty}^\infty \phi(\xi) G_2(w;z;0) \,\dd\xi \\
 =& \Bigl(\frac{\gamma}{2\pi}\Bigr) 
 \phi(\nu)\Bigl(\frac{1-2\sigma}{2}\Bigr)^{-1} \frac{1}{\sqrt{2\pi}} \\
&\times \biggl[
 \biggl(-\frac{J_1(1,0)}{2\sqrt{\pi}}
+\frac{J^+_0(1/2,1/2)}{2\sqrt{2}}
+\frac{2 J^+_2(0,1)-J^+_0(0,1)}{4}
\biggr)
 + \frac{J_0(1/2,0)}{\sqrt{2}} z \\
& \quad +\biggl(
\frac{J_1(1,0)}{2\sqrt{\pi}}
+\frac{J^-_0(1/2,1/2)}{2\sqrt{2}}
+\frac{2 J^-_2(0,1)-J^-_0(0,1)}{4}
\biggr)z^2 \biggr].
\end{aligned}
\end{equation}

We obtain $J^\pm_k$ using a recurrence formula in $k$.
We first assume that $\sigma\ge 0$ and hence $w$ in (\ref{w}) is real.
Recalling (\ref{w}), we use the change of measure
\[
 e^{-\frac{1}{2}\xi^2} e^{-a w^2}\,\dd\xi
= e^{-\frac{a (1-2\sigma)\nu^2}{2(1+2\sigma a)}} e^{-\frac{1}{2}\eta^2}
 \frac{\dd\eta}{\sqrt{1+2\sigma a}}
\]
where
\[
 \eta=\sqrt{1+2\sigma a}\biggl(\xi+\frac{a \sqrt{2\sigma}\sqrt{1-2\sigma}}{1+2\sigma a}\nu\biggr).
\]

We first obtain the recurrence formula for $J^\pm_k$ by integration by parts.
Noting that
\begin{align*}
\sqrt{2}w
= \sqrt{2\sigma} \biggl(\frac{\eta}{\sqrt{1+2\sigma a}}-\frac{a \sqrt{2\sigma}\sqrt{1-2\sigma}}{1+2\sigma a}\nu \biggr) + \sqrt{1-2\sigma}\nu
= \frac{\sqrt{2\sigma}}{\sqrt{1+2\sigma a}} \eta +\frac{\sqrt{1-2\sigma}}{1+2\sigma a}\nu,
\end{align*}
we have
\begin{align*}
\sqrt{2}\int_{-\infty}^\infty e^{-\frac{1}{2}\xi^2} e^{-a w^2} w^k F(w) \,\dd\xi
=&
  \frac{\sqrt{2\sigma}}{\sqrt{1+2\sigma a}} \int_{-\infty}^\infty e^{-\frac{1}{2}\xi^2} e^{-a w^2} \eta w^{k-1} F(w) \,\dd\xi \\
& +\frac{\sqrt{1-2\sigma}}{1+2\sigma a}\nu \int_{-\infty}^\infty e^{-\frac{1}{2}\xi^2} e^{-a w^2} w^{k-1} F(w) \,\dd\xi,
\end{align*}
whose first term is
\begin{align*}
& \frac{\sqrt{2\sigma}}{\sqrt{1+2\sigma a}} \frac{1}{\sqrt{1+2\sigma a}}
e^{-\frac{a (1-2\sigma)\nu^2}{2(1+2\sigma a)}}
 \int_{-\infty}^\infty e^{-\frac{1}{2}\eta^2} \eta w^{k-1} F(w)\,\dd\eta \\
&\qquad = \frac{\sqrt{2\sigma}}{\sqrt{1+2\sigma a}} \frac{1}{\sqrt{1+2\sigma a}}
e^{-\frac{a (1-2\sigma)\nu^2}{2(1+2\sigma a)}}
 \int_{-\infty}^\infty e^{-\frac{1}{2}\eta^2} \frac{d w}{d \eta} \frac{d}{d w}( w^{k-1} F(w))\,\dd\eta \\
&\qquad = \frac{2\sigma}{\sqrt{2}(1+2\sigma a)}\int_{-\infty}^\infty e^{-\frac{1}{2}\xi^2} e^{-a w^2} \frac{d}{d w} (w^{k-1} F(w)) \,\dd\xi.
\end{align*}
Therefore,
\begin{equation*}
\begin{aligned}
\int_{-\infty}^\infty e^{-\frac{1}{2}\xi^2} e^{-a w^2} w^k F(w) \,\dd\xi
=&
  \frac{2\sigma}{2(1+2\sigma a)}\int_{-\infty}^\infty e^{-\frac{1}{2}\xi^2} e^{-a w^2} w^{k-1} F'(w) \,\dd\xi \\
&+ (k-1)\frac{2\sigma}{2(1+2\sigma a)}\int_{-\infty}^\infty e^{-\frac{1}{2}\xi^2} e^{-a w^2} w^{k-2} F(w) \,\dd\xi \\
& +\frac{\sqrt{1-2\sigma}}{\sqrt{2}(1+2\sigma a)}\nu \int_{-\infty}^\infty e^{-\frac{1}{2}\xi^2} e^{-a w^2} w^{k-1} F(w) \,\dd\xi.
\end{aligned}
\end{equation*}
For $F(w)=\erfc\bigl(\pm\sqrt{b}w\bigr)$,
\[
 F'(w)=\mp\frac{2\sqrt{b}}{\sqrt{\pi}} e^{-b w^2}
\]
and
\begin{equation}
\label{rule22}
\begin{aligned}
J^\pm_k(a,b)
=&
\int_{-\infty}^\infty e^{-\frac{1}{2}\xi^2} e^{-a w^2} w^k \erfc\bigl(\pm\sqrt{b}\,w\bigr) \,\dd\xi \\
=&
 \mp\frac{2\sigma}{1+2\sigma a}\frac{\sqrt{b}}{\sqrt{\pi}} J_{k-1}(a+b,0) \\
&+ (k-1)\frac{2\sigma}{2(1+2\sigma a)} J^\pm_{k-2}(a,b)
 + \frac{\sqrt{1-2\sigma}}{\sqrt{2}(1+2\sigma a)}\nu J^\pm_{k-1}(a,b).
\end{aligned}
\end{equation}

Next, we obtain the initial value $J^\pm_0(a,b)$.
Recalling that $\erfc(x) = 2\Pr\bigl(\xi>\sqrt{2}x\bigr)$ with $\xi\sim \mathcal{N}(0,1)$,
we have
\begin{align*}
\int_{-\infty}^\infty & e^{-\frac{1}{2}\xi^2} e^{-a w^2} \erfc\bigl(\pm\sqrt{b}\,w\bigr) \,\dd\xi \\
&=\frac{1}{\sqrt{1+2\sigma a}}
e^{-\frac{a (1-2\sigma)\nu^2}{2(1+2\sigma a)}}\int_{-\infty}^\infty e^{-\frac{1}{2}\eta^2}\dd\eta \times 2\Pr\bigl(\xi>\pm\sqrt{2}(\sqrt{b}\,w)\bigr) \\
&=\frac{\sqrt{2\pi}}{\sqrt{1+2\sigma a}}
e^{-\frac{a (1-2\sigma)\nu^2}{2(1+2\sigma a)}} \times 2\Pr\bigl(\xi>\pm\sqrt{2 b}\,W\bigr)
\end{align*}
with
\[
 \sqrt{2}W = \frac{\sqrt{2\sigma}}{\sqrt{1+2\sigma a}} \eta + \frac{\sqrt{1-2\sigma}}{1+2\sigma a}\nu,
\]
where $\eta\sim \mathcal{N}(0,1)$ is independent of $\xi$.
Therefore,
\begin{align*}
 2\Pr\bigl(\xi>\sqrt{2 b}\,W\bigr)
=& 2\Pr\biggl(\xi \mp\frac{\sqrt{b} \sqrt{2\sigma}}{\sqrt{1+2\sigma a}}\eta > \pm\frac{\sqrt{b}\sqrt{1-2\sigma}}{1+2\sigma a}\nu\biggr) \\
=& 2\Pr\biggl(\xi' > \pm\frac{\sqrt{b}\sqrt{1-2\sigma}}{\sqrt{1+2\sigma a}\sqrt{1+(a+b) 2\sigma}}\nu\biggr), \quad \xi'\sim\mathcal{N}(0,1), \\
=& \erfc\biggl(\pm\frac{\sqrt{b/2}\sqrt{1-2\sigma}}{\sqrt{1+2\sigma a}\sqrt{1+(a+b) 2\sigma}}\nu\biggr), 
\end{align*}
hence,
\begin{equation}
\label{rule3}
\begin{aligned}
 J^\pm_0(a,b)
=& \int_{-\infty}^\infty e^{-\frac{1}{2}\xi^2} e^{-a w^2} \erfc\bigl(\pm\sqrt{b}\,w\bigr) \,\dd\xi \\
=& \frac{\sqrt{2\pi}}{\sqrt{1+2\sigma a}}
 e^{-\frac{a (1-2\sigma)\nu^2}{2(1+2\sigma a)}}
\erfc\biggl(\pm\frac{\sqrt{b/2}\sqrt{1-2\sigma}}{\sqrt{1+2\sigma a}\sqrt{1+(a+b) 2\sigma}}\nu\biggr). 
\end{aligned}
\end{equation}

Although we assumed $\sigma\ge 0$ so far,
(\ref{rule22}) and (\ref{rule3}) hold for admissible negative values of $\sigma$ by analytic continuation.

By means of (\ref{rule22}) and (\ref{rule3}), the integral
$\int_{-\infty}^\infty e^{-\frac{1}{2}\xi^2} G_d(w;z;0) \,\dd\xi$ in (\ref{F1}) and (\ref{F2}) can be evaluated.
Now we obtain
\begin{align*}
 F_1(\nu;z) = \Bigl(\frac{\gamma}{2\pi}\Bigr)^{1/2} \phi(\nu) \times\Biggl[ &
\Biggl(\frac{\sqrt{2+2\sigma} e^{-\frac{(1-2\sigma) \nu^2}{2 (2+2\sigma)}}}{\sqrt{2 \pi } \sqrt{1-2\sigma}}-\frac{1}{2} \nu \erfc\left(\frac{\sqrt{1-2\sigma} \nu}{\sqrt{2} \sqrt{2+2\sigma}}\right)\Biggr) \\
& + z \Biggl(
\frac{\sqrt{2+2\sigma} e^{-\frac{(1-2\sigma) \nu^2}{2 (2+2\sigma)}}}{\sqrt{2 \pi } \sqrt{1-2\sigma}}
+ \frac{1}{2} \nu \erfc\left(-\frac{\sqrt{1-2\sigma} \nu}{\sqrt{2} \sqrt{2+2\sigma}}\right)\Biggr)
\Biggr],
\end{align*}
\begin{align*}
 F_2(\nu;z) = \Bigl(\frac{\gamma}{2\pi}\Bigr) \phi(\nu)
 \times\Biggl[ &
 \Biggl(
 -\frac{\sqrt{1+2\sigma}\nu e^{-\frac{(1-2\sigma) \nu^2}{2 (1+2\sigma)}}}{\sqrt{2\pi}\sqrt{1-2\sigma}}
 +\frac{e^{-\frac{(1-2\sigma) \nu^2}{2 (2+2\sigma)}} \erfc\left(\frac{\sqrt{1-2\sigma} \nu}{\sqrt{2} \sqrt{1+2\sigma} \sqrt{2+2\sigma}}\right)}{(1-2\sigma) \sqrt{2+2\sigma}} \\
 & \qquad\qquad + \frac{1}{2} \left(\nu^2-1\right) \erfc\left(\frac{\sqrt{1-2\sigma} \nu}{\sqrt{2} \sqrt{1+2\sigma}}\right)
\Biggr) \\
 & +z\frac{2 e^{-\frac{(1-2\sigma) \nu^2}{2 (2+2\sigma)}}}{(1-2\sigma) \sqrt{2+2\sigma}} \\
& +z^2 \Biggl(
 \frac{\sqrt{1+2\sigma}\nu e^{-\frac{(1-2\sigma) \nu^2}{2 (1+2\sigma)}}}{\sqrt{2\pi}\sqrt{1-2\sigma}}
 +\frac{e^{-\frac{(1-2\sigma) \nu^2}{2 (2+2\sigma)}} \erfc\left(-\frac{\sqrt{1-2\sigma}\nu}{\sqrt{2} \sqrt{1+2\sigma} \sqrt{2+2\sigma}}\right)}{(1-2\sigma) \sqrt{2+2\sigma}} \\
 & \qquad\qquad + \frac{1}{2} \left(\nu^2-1\right) \erfc\left(-\frac{\sqrt{1-2\sigma} \nu}{\sqrt{2} \sqrt{1+2\sigma}}\right)
 \Biggr)
 \Biggr].
\end{align*}

The formulas for $F_d(\nu;z)$ above can be shown to be equivalent to those of \citet[Example 4.6]{Cheng-Schwartzman:2018} with $\kappa = \sqrt{1-2\sigma}$ and $\eta = \sqrt{2/\gamma}\sqrt{1-2\sigma}$.
We also confirm that
\begin{align*}
 F_1(\nu;-1)
 =& -\Bigl(\frac{\gamma}{2\pi}\Bigr)^{1/2} \phi(\nu)\nu, \\
 F_2(\nu;-1)
 =& \Bigl(\frac{\gamma}{2\pi}\Bigr) \phi(\nu)(\nu^2-1).
\end{align*}

The results for $F_d(\nu;z)$ are valid only for $\sigma\in(-1/d,1/2)$.
Howevere, they have the limits when $\sigma\downarrow -1/d$\,:
\begin{equation}
\label{F12boundary}
\begin{aligned}
 \lim_{\sigma\downarrow -1}F_1(\nu;z)
 =& \Bigl(\frac{\gamma}{2\pi}\Bigr)^{1/2} \phi(\nu)\bigl[-\nu \1(\nu<0) + z \nu 1(\nu>0)\bigr], \\
\lim_{\sigma\downarrow -1/2}F_2(\nu;z)
 =& \Bigl(\frac{\gamma}{2\pi}\Bigr) \phi(\nu)\Bigl[(e^{-\nu^2}+\nu^2-1)\1(\nu<0) + z e^{-\nu^2} + z^2(e^{-\nu^2}+\nu^2-1)\1(\nu>0)\Bigr],
\end{aligned}
\end{equation}
which are consistent with the formulas $G_d(x;z;-1/d)$ in (\ref{G12boundary}).

\subsection*{Acknowledgments}

This research was partially supported by JSPS KAKENHI Grants Nos.\ JP25K15034 (SK) and JP24K00634 (SI).

\bigskip

\bibliographystyle{abbrvnat}
\bibliography{index-bib}

\end{document}